\DeclareMathOperator*{\argmax}{argmax}
\newtheorem{theorem}{Theorem}
\newtheorem{lemma}{Lemma}
\newtheorem{observation}{Observation}
\begin{document}

\begin{frontmatter}

\title{Optimizing Chance-Constrained Submodular Problems \\with Variable Uncertainties}

\author[A]{\fnms{Xiankun}~\snm{Yan}\thanks{Corresponding Author. Email: xiankun.yan@adelaide.edu.au.}}
\author[A]{\fnms{Anh Viet}~\snm{Do}}
\author[B,C]{\fnms{Feng}~\snm{Shi}} 
\author[D]{\fnms{Xiaoyu}~\snm{Qin}} 
\author[A]{\fnms{Frank}~\snm{Neumann}}

\address[A]{Optimisation and Logistics, University of Adelaide, Adelaide, Australia}
\address[B]{School of Computer Science and Engineering, Central South University, Changsha, P.R. China}
\address[C]{Xiangjiang Laboratory, Changsha, P.R. China}
\address[D]{University of Birmingham, Birmingham, UK}

\begin{abstract}
Chance constraints are frequently used to limit the probability of constraint violations in real-world optimization problems where the constraints involve stochastic components.
We study chance-constrained submodular optimization problems, which capture a wide range of optimization problems with stochastic constraints. 
Previous studies considered submodular problems with stochastic knapsack constraints in the case where uncertainties are the same for each item that can be selected.
However, uncertainty levels are usually variable with respect to the different stochastic components in real-world scenarios, 
and rigorous analysis for this setting is missing in the context of submodular optimization.
This paper provides the first such analysis for this case, 
where the weights of items have the same expectation but different dispersion.
We present greedy algorithms that can obtain a high-quality solution, i.e.,
a constant approximation ratio
to the given optimal solution from the deterministic setting.
In the experiments, we demonstrate that the algorithms perform effectively on several chance-constrained instances of the maximum coverage problem and the influence maximization problem.
\end{abstract}

\end{frontmatter}

\begin{abstract}
Chance constraints are frequently used to limit the probability of constraint violations in real-world optimization problems where the constraints involve stochastic components.
We study chance-constrained submodular optimization problems, which capture a wide range of optimization problems with stochastic constraints. 
Previous studies considered submodular problems with stochastic knapsack constraints in the case where uncertainties are the same for each item that can be selected.
However, uncertainty levels are usually variable with respect to the different stochastic components in real-world scenarios, 
and rigorous analysis for this setting is missing in the context of submodular optimization.
This paper provides the first such analysis for this case, 
where the weights of items have the same expectation but different dispersion.
We present greedy algorithms that can obtain a high-quality solution, i.e.,
a constant approximation ratio
to the given optimal solution from the deterministic setting.
In the experiments, we demonstrate that the algorithms perform effectively on several chance-constrained instances of the maximum coverage problem and the influence maximization problem. 
\end{abstract}

\section{Introduction}
Stochastic components can significantly affect the quality of solutions for a given stochastic optimization problem.
Reducing the uncertain effect of stochastic components is vital 
to avoid potentially disruptive incidents in the complex and expensive system.  
\textit{Chance constraints} can be applied to optimization tasks, which limit the probability of incidental constraint violations~\cite{charnes1959chance,iwamura1996genetic,miller1965chance,poojari2008genetic}.  
A chance-constrained 
optimization problem can be described as finding an optimal solution subject to the condition that the constraints are only violated with a given small probability. 
Recently, the problem has been investigated widely~\cite{doerr2020optimization,neumann2020optimising,neumann2019runtime,neumann2022runtime,shi2022runtime,xie2019evolutionary,xie2020specific,xie2021runtime}.
A typical technique for taking chance constraints into account for a given optimization problem is 
to convert the stochastic constraints to their respective deterministic equivalents for a given conﬁdence level, which is possible when considering normally distributed stochastic components.

\textit{Submodular functions}~\cite{nemhauser1978analysis}
capture problems of diminishing returns which frequently appear in real-world scenarios.
They constitute a significant category of optimization challenges.
In the artificial intelligence literature, greedy algorithms\cite{das2011submodular,doerr2020optimization,friedrich2019greedy,zhang2016submodular} and Pareto optimization approaches\cite{neumann2020optimising,roostapour2022pareto,qian2017subset,qian2017subsets} based on evolutionary multi-objective algorithms have been widely examined for submodular optimization problems.
The goal for a submodular optimization problem with a given knapsack constraint is to find a set of elements with the maximal value of the submodular function whose total weight does not exceed the budget of the given knapsack.
There are many analyses on the deterministic version of this submodular optimization problem~\cite{khuller1999budgeted,krause2014submodular,nemhauser1978analysis,qian2017subset}.
Often the weights of elements might be stochastic and sampled from a probability distribution. 
The \textit{Chance-constrained Submodular Problem} ~\cite{chen2019chance} has been proposed to model this case. Here, the goal of the problem is to maximize a given monotone submodular function subject to the constraint that the probability of violating the knapsack constraint is no more than a small threshold value.
For this problem, Chen and Maehara~\cite{chen2019chance} reduced the chance constraint of the problem into multiple deterministic constraints by guessing the parameters and relaxed the knapsack budget and threshold. 
They rigorously analyzed an algorithm that enumerates all parameters 
for the abstracted problem with random weights sampled from arbitrary known distributions, which meets the relaxed constraint.
Doerr et al.,~\cite{doerr2020optimization} investigated a specific variant of the problem where the weights are sampled from a uniform distribution with an identical dispersion. 
They applied the one-sided Chebyshev’s inequality and a Chernoff bound separately to construct the surrogates that helps to estimate the probability of constraint violation.
In addition, they empirically showed that using the greedy algorithms based on such surrogates gives high-quality solutions in stochastic scenarios.
Furthermore, multi-objective evolutionary algorithms have also been employed to tackle this problem, e.g., the GSEMO algorithm~\cite{neumann2020optimising}.
It has been theoretically analyzed and found to achieve comparable performance to the greedy algorithms in the worse case within polynomial time.
However, uncertainties of items usually vary between the items 
and greedy algorithms with theoretical performance guarantees missing in the literature.
Such an analysis is supposed to be more challenging than the one carried out in \cite{doerr2020optimization}
since variable uncertainties of the weights lead to more intricate effects than identical uncertainties, which is reflected in the surrogate based on one-sided Chebyshev’s inequality.

In this paper, we focus on a general setting of the problem studied in \cite{doerr2020optimization,neumann2020optimising}, 
i.e., the weights of the elements are sampled from uniform distributions 
with the same expectation but different dispersion values, 
instead of from an identical uniform distribution.
We remark the item's dispersion as its uncertainty level,
such that the uncertainty level varies from item to item in this setting. 
The one-sided Chebyshev's inequality is used to construct a surrogate of the chance constraint.
In addition to the greedy algorithm (GA)
and the generalized greedy algorithm 
(GGA), our analysis also encompasses another studied greedy algorithm, the generalized greedy$+$Max algorithm (GGMA)~\cite{yaroslavtsev2020bring}.
Our rigorous analysis demonstrates that 
the GA struggles to effectively obtain an acceptable solution in the worse case
due to a heavy impact arising from the variable uncertainties.
For the GGA and the GGMA, we first use a simple strategy for element selection, 
which only considers the sum of the dispersion values.
The algorithms cannot guarantee a high-quality solution in some linear instances.
Instead of using this simple strategy,
simultaneously considering the expectation and the dispersion is promising to fill this gap. 
We adopt an improved strategy that applies the surrogate of the chance constraints in selecting elements.
Using this strategy, the GGA and the GGMA can obtain 
a $(1/2-o(1))(1-1/e)-$approximation and 
$(1/2-o(1))-$approximation, respectively.
Finally, we empirically analyze the performance of the algorithms 
on twelve chance-constrained instances of the maximum coverage problem 
and the influence maximization problem. 
The empirical results show that
the GGA and the GGMA beat the GA in most instances.
Furthermore, the GGMA obtains a solution of similar quality as the GGA,
which verifies and supplements our theoretical results.

The paper is structured as follows. 
Sections~\ref{sec:pl}-\ref{sec:algs} introduce the studied problem and the algorithms. 
Our theoretical results of the investigated algorithms are shown in Sections~\ref{sec:pga}-\ref{sec:pggma}. 
We present our experimental results in Section~\ref{sec:exp} and finish with some conclusions in Section~\ref{sec:con}.

\section{Preliminaries}
\label{sec:pl}
\subsection{Problem Definition}
Consider a set $V = \{1,...,n\}$, in which each element $i \in V$ has a weight $w_i$, and a function $f:V'\to \mathbb{R}_{\geq 0}$ defined on the subsets $V' \subseteq V$.
The function $f$ is {\it monotone} iff for any two subsets $S, T\subseteq V$ with $S \subseteq T$, $f(S) \le f(T)$ holds. 
Besides, the function $f$ is {\it submodular} iff for any two subsets $S$, $T\subseteq V$ with $S\subseteq T$ and any element $e \notin T$, 
\begin{equation}
\label{equ:1}
    \centering
    f(S\cup \{e\})-f(S) \ge f(T\cup \{e\}) - f(T).
\end{equation}

Given a \textit{monotone} submodular function $f$ defined on the subsets of $V$ and a budget $B$, the problem named 
{\it the submodular problem with respect to $V$ and $B$} 
is to look for a subset $S \subseteq V$ such that $f(S)$ is maximized and $W(S) \le B$, where $W(S) = \sum_{i \in S} w_i$.  

Within the investigation, we focus on a {\it chance-constrained} version of the submodular problem, in which the weight $w_i$ of each element $i \in V$ is random (not deterministic) and has expected value $E[w_i] = a_i$ and variance $\sigma^2_i\geq 0$. The aim is find a subset $S \subseteq V$ such that $f(S)$ is maximized and subject to the constraint that $Pr[W(S)>B] \le \alpha$, where the threshold $0 \le \alpha \le 1$ is given which upper bounds the probability of a constraint violation. 

As mentioned above, given an instance of the chance-constrained submodular problem, the weight $W(S) = \sum_{i\in S} w_i$ of a solution $S$ to it is random but has an expectation
$E[W(S)] = \sum_{i\in S} a_i$,
and variance
$Var[W(S)] = \sum_{i\in S}\sigma^2_i$.

Within the paper, we consider a specific setting for the chance-constrained submodular problem, 
in which the random weight $w_i$ of the element $i\in V$ is independently uniformly sampled from the interval $[a_i-\delta_i, a_i+\delta_i]$ at random. 
Besides, we consider $a_i =1$ and $0\leq\delta_i\leq 1$. 
Note that, for a uniform distribution, the expectation and variance can be calculated by the given interval bounds~\cite{xie2019evolutionary}.
Therefore the expected weights of all elements are $1$, and the variance of each element $i$ is $Var[W(i)] = \delta_i^2/3$.
Furthermore, w.l.o.g., assume every single element is feasible with respect to the budget $B$. 
Observe that the case that $0 \le B \le 1$ is meaningless, thus we assume $B>1$ such that at least one item is in the solution throughout the paper. 
\subsection{Surrogate of the Chance constraint}

For the probability $Pr[W(S)>B]$, as the work given in~\cite{xie2019evolutionary}, we consider the one-sided Chebyshev's inequality to construct a usable surrogate of the chance constraint, whose formulation is given below. 

\begin{theorem}[\textit{One-sided Chebyshev's inequality}]
    For any random variable $X$ and $\lambda \geq 0$,
    $Pr[X>E[X]+\lambda]\leq \frac{Var[X]}{Var[X]+\lambda^2}.$
\end{theorem}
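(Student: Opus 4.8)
The plan is to derive this one-sided tail bound (often called Cantelli's inequality) from the ordinary Markov inequality by means of a \emph{shift-and-square} argument, so that nothing beyond the existence of the first two moments is required. For brevity write $\mu = E[X]$ and $\sigma^2 = Var[X]$, and assume $\lambda > 0$; the case $\lambda = 0$ is immediate because the right-hand side is then $1$, and if $\sigma^2 = 0$ the variable is almost surely constant and both sides vanish.

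First I would introduce a free parameter $t \ge 0$ and note that, since $t$ is a constant, the event $\{X > \mu + \lambda\}$ is contained in $\{X - \mu + t > \lambda + t\}$, which for $\lambda + t > 0$ is in turn contained in $\{(X - \mu + t)^2 > (\lambda + t)^2\}$. Applying Markov's inequality to the nonnegative random variable $(X - \mu + t)^2$ then gives
\begin{equation}
Pr[X > \mu + \lambda] \le \frac{E[(X - \mu + t)^2]}{(\lambda + t)^2}.
\end{equation}
The numerator expands cleanly: because $E[X - \mu] = 0$ the cross term vanishes, so $E[(X - \mu + t)^2] = Var[X] + t^2 = \sigma^2 + t^2$. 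This leaves a whole family of valid bounds $Pr[X > \mu + \lambda] \le (\sigma^2 + t^2)/(\lambda + t)^2$, one for each admissible $t$.

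The decisive step is to optimize over this free parameter. Since the inequality holds for every $t \ge 0$, I would minimize $g(t) = (\sigma^2 + t^2)/(\lambda + t)^2$. Differentiating and clearing the positive denominator, the stationarity condition collapses to $t\lambda - \sigma^2 = 0$, so $t^\star = \sigma^2/\lambda \ge 0$. Substituting $t^\star$ back and simplifying reduces the expression to $\sigma^2/(\sigma^2 + \lambda^2)$, which is exactly the claimed bound.

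The main obstacle is conceptual rather than computational: the naive route of applying the two-sided Chebyshev inequality and retaining a single tail yields only the weaker bound $\sigma^2/\lambda^2$, and sharpening it to the one-sided form genuinely requires introducing the shift $t$ and, crucially, selecting $t^\star = \sigma^2/\lambda$. Once the parametrized family of bounds is in hand I expect the remaining work — the derivative and the final algebraic simplification — to be routine. A minor point worth verifying is the passage from the strict inequality $X > \mu + \lambda$ in the statement to the squared event above, which is harmless since enlarging the threshold event only enlarges the set whose probability we bound.
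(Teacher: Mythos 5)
Your proof is correct: the inclusion of events, the computation $E[(X-\mu+t)^2]=\sigma^2+t^2$, and the optimization yielding $t^\star=\sigma^2/\lambda$ with value $\sigma^2/(\sigma^2+\lambda^2)$ all check out, and this is the standard derivation of Cantelli's inequality. The paper itself states this theorem as a known classical result and gives no proof, so there is no authorial argument to compare against; your shift-and-square argument via Markov's inequality is exactly the textbook route one would expect here. The only cosmetic remark is that when $\sigma^2=0$ and $\lambda=0$ simultaneously the right-hand side of the statement is the indeterminate form $0/0$, but that is a defect of the statement as written, not of your argument.
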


For a solution $S$ to the chance-constrained submodular problem, if $Pr[W(S) > B] \le \alpha$, then it is {\it feasible}; otherwise, {\it infeasible}. 
By the one-sided Chebyshev's inequality, we have the following observation directly, which considers the feasibility of a given solution.

\begin{observation}
\label{obs:feasibility}
Given a solution $S$ to the chance-constrained submodular problem,
if $E[W(S)] + \sqrt{\frac{(1-\alpha)Var[W(S)]}{\alpha}}$ $ \leq B$ then the solution $S$ is feasible.
\end{observation}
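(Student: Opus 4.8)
The plan is to instantiate the one-sided Chebyshev's inequality (the Theorem above) with $X = W(S)$ and a carefully chosen deviation $\lambda$, and then show that the resulting tail bound does not exceed $\alpha$. The natural choice is $\lambda = B - E[W(S)]$, since this makes the event $\{W(S) > B\}$ coincide with $\{X > E[X] + \lambda\}$, which is exactly the event the inequality controls. Recall that feasibility of $S$ means precisely $Pr[W(S) > B] \le \alpha$, so this is the quantity I need to bound.

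Before invoking the inequality I would first check its precondition $\lambda \ge 0$. This follows immediately from the hypothesis: since $\sqrt{(1-\alpha)Var[W(S)]/\alpha} \ge 0$, the assumption $E[W(S)] + \sqrt{(1-\alpha)Var[W(S)]/\alpha} \le B$ gives $B - E[W(S)] \ge \sqrt{(1-\alpha)Var[W(S)]/\alpha} \ge 0$, so $\lambda$ is admissible and, more usefully, is bounded below by the surrogate term. With $\lambda$ in hand, applying the theorem yields $Pr[W(S) > B] \le \frac{Var[W(S)]}{Var[W(S)] + \lambda^2}$.

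It then remains to show this right-hand side is at most $\alpha$. I would rearrange the target inequality $\frac{Var[W(S)]}{Var[W(S)] + \lambda^2} \le \alpha$ into the equivalent form $\lambda^2 \ge (1-\alpha)Var[W(S)]/\alpha$ by clearing the denominator and collecting the $Var[W(S)]$ terms. This last inequality holds precisely because $\lambda \ge \sqrt{(1-\alpha)Var[W(S)]/\alpha}$ as established above, where squaring preserves the inequality since both sides are non-negative. Chaining the two bounds gives $Pr[W(S) > B] \le \alpha$, i.e.\ $S$ is feasible.

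The argument is short and no step poses a genuine obstacle; the only points requiring care are bookkeeping. I must assume $\alpha > 0$ so that the surrogate expression is well defined (the denominator $\alpha$), and I must record the non-negativity of $\lambda$ twice: once to legitimately apply Chebyshev, and once to justify squaring the lower bound on $\lambda$. Implicitly, what makes the whole scheme work is that the map $t \mapsto Var[W(S)]/(Var[W(S)] + t^2)$ is decreasing for $t \ge 0$, so the lower bound on $\lambda$ furnished by the hypothesis translates directly into the desired upper bound on the tail probability.
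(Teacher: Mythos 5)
Your proof is correct and is exactly the argument the paper intends: the paper simply asserts that the observation follows ``directly'' from the one-sided Chebyshev's inequality, and your instantiation with $X = W(S)$ and $\lambda = B - E[W(S)]$, together with the check that $\lambda^2 \ge (1-\alpha)Var[W(S)]/\alpha$ forces the tail bound below $\alpha$, is the standard way to fill in those details. No substantive difference from the paper's (omitted) reasoning.
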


By the above observation, the {\it surrogate weight} of a solution can be defined as
$\Gamma(S) := E[W(S)] + \kappa_{\alpha}\sqrt{ Var[W(S)]},$
where $\kappa_{\alpha} = \sqrt{\frac{1-\alpha}{\alpha}}$.

\section{Algorithms}
\label{sec:algs}
The first algorithm studied is the greedy algorithm (GA, see Algorithm~\ref{alg:ga}), 
which was analyzed in~\cite{doerr2020optimization} for the chance-constrained submodular problem with all elements having iid weights.
The GA starts with an empty set and picks the element with the largest marginal gain that meets the constraint in each iteration. 
It stops when no more elements can be accepted without violating the constraint.

Considering that the elements may have different weights, the generalized greedy algorithm (GGA, see Algorithm~\ref{alg:gga}) is studied. 
Similar to the mechanism of the GA, the GGA starts with an empty set and stops when no more elements can be added due to the chance constraint. However, the GGA selects the element that satisfies the chance constraint and maximizes the ratio between the additional gain in the objective function $f$ and that in a non-decreasing function $h$. 
As the variances and surrogate weights of the solutions to the problem are non-decreasing, two strategies based on two non-decreasing functions $h$ are respectively studied:
\textbf{Strategy \uppercase\expandafter{\romannumeral1} $h(S) := \sum_{i\in S}\delta_i^2$}, and \textbf{Strategy \uppercase\expandafter{\romannumeral2} $h(S) :=\Gamma(S)$}.
Uncertainties of the solution are only considered in Strategy \uppercase\expandafter{\romannumeral1} and the surrogate weight  based on the one-sided Chebyshev's inequality is studied in Strategy \uppercase\expandafter{\romannumeral2}. 
Furthermore, Lines~\ref{gga:line 10}-\ref{gga:line 11} of Algorithm~\ref{alg:gga} are required when there exists an element with an extremely high objective value, see~\cite{khuller1999budgeted,leskovec2007cost} for more details. 

Additionally, the generalized greedy$+$Max algorithm (GGMA, see Algorithm~\ref{alg:ggma}) is studied. 
The GGMA adopts the same greedy strategies as the GGA, but it uses the feasible item having the largest marginal gain to augment every partial greedy solution. 
More specifically, the augmenting item is selected among the remaining items that still meet the constraint in each iteration.
Until no element can fit into the solution, the GGMA stops and outputs the best-augmented solution. 

Note that the surrogate is applied to the algorithms instead of calculating the probability $Pr[W(S)>B]$.
We only are using the exact calculation for $Pr[W(v)>B]$ when considering a single element at line~\ref{gga:line 10} in the GGA.


\begin{algorithm}[t]
\caption{Greedy Algorithm (GA)}
\label{alg:ga}
\textbf{Input}: \resizebox{0.42\textwidth}{!}{%
Elements set $V$, budget constraint $B$, failure probability $\alpha$
}\\
\textbf{Output}:$S$
\begin{algorithmic}[1]
    \STATE $S \gets \emptyset,V'\gets V$\;\label{line:GA_feasible_options}
    \WHILE{$V'\neq\emptyset$}
    \STATE $v^* \gets \arg\max_{v\in V'}f(S\cup\{v\})-f(S)$\;\label{line:GA_ratio}
    \IF{$\Gamma(S\cup \{v^*\})\leq  B$}
    \STATE $S\gets S\cup \{v^*\}$\;
    \ENDIF
    \STATE $V'\gets V'\setminus\{v^*\}$\;
    \ENDWHILE
\end{algorithmic}
\end{algorithm}

\begin{algorithm}[t]
\caption{Generalized Greedy Algorithm (GGA)}
\label{alg:gga}
\textbf{Input}:\resizebox{0.42\textwidth}{!}{%
Elements set $V$, budget constraint $B$, failure probability $\alpha$
}\\
\textbf{Output}: $S$
\begin{algorithmic}[1]
    \STATE $S \gets \emptyset,V'\gets V$\;\label{line:GGA_feasible_options}
    \WHILE{$V'\neq\emptyset$}
    \STATE $v^* \gets \arg\max_{v\in V'}\frac{f(S\cup\{v\})-f(S)}{h(S\cup \{v\})-h(S)}$\;\label{line:GGA_ratio}
    \IF{$\Gamma(S\cup \{v^*\})\leq  B$}
    \STATE $S\gets S\cup \{v^*\}$\;
    \ENDIF
    \STATE $V'\gets V'\setminus\{v^*\}$\;
    \ENDWHILE
    \STATE $v^*\gets \arg\max_{\{v\in V; Pr[W(v)>B]\le \alpha\}} f(v)$\; \label{gga:line 10}
    \STATE $S\gets\arg\max_{Y\in\{S,\{v^*\}\}}f(Y)$\;\label{gga:line 11}
\end{algorithmic}
\end{algorithm}

\begin{algorithm}[t]
\caption{
\resizebox{0.37\textwidth}{!}{%
Generalized Greedy+Max Algorithm (GGMA)
}}
\label{alg:ggma}
\textbf{Input}: \resizebox{0.42\textwidth}{!}{%
Elements set $V$, budget constraint $B$, failure probability $\alpha$
}\\
\textbf{Output}:$T$
\begin{algorithmic}[1]
    \STATE $T \gets \emptyset,S \gets \emptyset,V'\gets V$\;
    \STATE $V'\gets \left\{v\in V'\setminus S \mid \Gamma(S\cup \{v\}) \leq B \right\}$\;\label{ggma:us}
    \WHILE{$V'\neq\emptyset$}
    \STATE $v' \gets \arg\max_{v\in V'}{f(S\cup\{v\})}$\;\label{line:GGMA_ratio}
    \IF{$f(T)< f(S\cup \{v'\})$}
    \STATE $T\gets S\cup \{v'\}$\;
    \ENDIF
    \STATE $v^* \gets \arg\max_{v\in V'}\frac{f(S\cup\{v\})-f(S)}{h(S\cup \{v\})-h(S)}$\;
    \STATE $S\gets S\cup \{v^*\}$\;
    \STATE Update $V'$ as Line~\ref{ggma:us}\;
    \ENDWHILE
\end{algorithmic}
\end{algorithm}

\section{Performance of the GA}
\label{sec:pga}
According to the previous work~\cite{doerr2020optimization},
the GA is theoretically proven that works well in chance-constrained submodular problems with identical weight and uncertainty.
However, since the uncertainties become variable,
the GA is hard to obtain a high-quality solution, 
which is proved in the below.

Let $S_{cc}$ be the solution obtained by the GA. 
From Theorem~\ref{thm:gap}, we find that the GA performs badly on some linear instances. 
Before the statement, we define such a linear instance $I_1$, 
in which let $V$ have at least $B+1$ elements, $f(S)=|S|$, $\gamma\in(0,1]$, $\alpha\in\left(\frac{3\gamma}{(B-1)^2+3\gamma},\frac{3\gamma}{(B-2)^2+3\gamma}\right)$, $\delta_1=\sqrt{\gamma}$, and $\delta_i=0$ for all $i\geq2$. 
\begin{theorem}
\label{thm:gap}
     There exists a linear instance $I_1$ such that the GA fails to guarantee better than $(1/B)$-approximation.
\end{theorem}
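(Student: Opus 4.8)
The plan is to exhibit, on the instance $I_1$, a run of the GA whose output $S_{cc}$ has objective value $1$, while the benchmark optimum has value $\lfloor B\rfloor$; for integral $B$ this gives a ratio of exactly $1/B$, so the GA cannot guarantee anything better. The whole argument hinges on two properties that the prescribed interval for $\alpha$ is engineered to produce: the single uncertain element (element $1$, the only one with $\delta_1>0$) is feasible on its own with respect to the surrogate test $\Gamma(\cdot)\le B$, yet element $1$ paired with any one deterministic element already exceeds $B$.

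First I would pin down the benchmark. Following the paper's convention, the comparison is against the optimum of the deterministic instance in which every weight is fixed to its expectation $1$; there a set is feasible iff $|S|\le B$, and since $f(S)=|S|$ the deterministic optimum has value $\lfloor B\rfloor$, attained by any $\lfloor B\rfloor$ of the deterministic elements $i\ge 2$ (of which there are enough, since $V$ has at least $B+1$ elements and only element $1$ is uncertain). The same set is genuinely feasible for the chance constraint as well, because its weight is the constant $\lfloor B\rfloor\le B$; moreover element $1$ can never help, as it carries variance but no extra expected room. Hence the value to beat is $\lfloor B\rfloor$.

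Next I would trace the GA. Because $f$ is the cardinality function, the marginal gain in Line~\ref{line:GA_ratio} equals $1$ for every candidate, so the $\argmax$ is resolved purely by tie-breaking; I fix the worst-case order that examines element $1$ first, which is exactly the freedom a lower bound on the guarantee may exploit. After element $1$ is accepted, $S=\{1\}$ with $E[W(S)]=1$ and $Var[W(S)]=\delta_1^2/3$. Writing $c:=\kappa_\alpha\sqrt{Var[W(\{1\})]}$, the key computation substitutes the two endpoints of the $\alpha$-interval into $\kappa_\alpha=\sqrt{(1-\alpha)/\alpha}$ and verifies that the interval forces $c\in(B-2,B-1)$: the left endpoint gives $\Gamma(\{1\})=1+c\le B$ (so element $1$ passes the test and is kept), while the right endpoint gives $\Gamma(\{1\}\cup\{v\})=2+c>B$ for every deterministic $v$. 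Consequently, once $S=\{1\}$, each remaining candidate fails the feasibility test $\Gamma(S\cup\{v^*\})\le B$, is removed from $V'$ without being added, and the loop terminates with $S=\{1\}$. Therefore $f(S_{cc})=1$ and $f(S_{cc})/\lfloor B\rfloor=1/\lfloor B\rfloor$, which is no better than $1/B$, and equal to $1/B$ when $B$ is an integer.

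The main obstacle is precisely the boundary computation certifying $c\in(B-2,B-1)$ across the whole open interval for $\alpha$, since this single two-sided inequality both keeps $\{1\}$ feasible and rules out every pair $\{1,v\}$; plugging the endpoints into $\kappa_\alpha$ and simplifying with $Var[W(\{1\})]=\delta_1^2/3$ is the crux, and one must track the constants carefully so that the transition lands exactly between one and two elements. A secondary point worth stating cleanly is the tie-breaking assumption: because all marginal gains coincide, the poor behaviour is not an artefact but a legitimate execution of the GA, which is all that is needed to refute any better-than-$1/B$ guarantee.
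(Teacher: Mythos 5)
Your proposal matches the paper's proof: both exploit that all marginal gains of the cardinality objective are equal, so adversarial tie-breaking lets the GA accept element $1$ first, after which the choice of $\alpha$ forces $\Gamma(\{1\})\in(B-1,B)$ so that no second element passes the surrogate test, yielding $f(S_{cc})=1$ against an optimum of value $B$ (the paper takes $Y=\{2,\ldots,B+1\}$). Your treatment is in fact slightly more careful than the paper's one-line argument, e.g.\ in distinguishing $\lfloor B\rfloor$ from $B$ and in isolating exactly which endpoint of the $\alpha$-interval certifies feasibility of $\{1\}$ versus infeasibility of every pair.
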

\begin{proof}
    Considering the instance $I_1$, we have $\Gamma(\{1\})\in(B-1,B)$ and the GA on $I_1$ can pick element $1$ in the first iteration, preventing it from continuing. Thus $f(S_{cc})=1$, and the solution is $(1/B)-$ approximation while $Y=\{2,\ldots,B+1\}$, $f(Y)=\Gamma(Y)=B$. The claim is proved.
\end{proof}

The proof reveals that the GA rapidly exhausts the budget (i.e., selecting only one element) due to the significant influence of dispersion in the surrogate. This is the primary factor leading to the suboptimal performance of the GA.

\section{Performance of the GGA}
\label{sec:pgga}


\subsection{Analysis of Using Strategy \uppercase\expandafter{\uppercase\expandafter{\romannumeral1}}}
\label{ssec: gga1}
For Strategy \uppercase\expandafter{\uppercase\expandafter{\romannumeral1}}: $h(S) := \sum_{i\in S}\delta_i^2$, 
we also find that there exists a collection of linear instances of the problem, for which the GGA is hard to obtain a high-quality solution. 
To facilitate the construction of these instances, 
a solution $S$ is encoded as a decision vector $X = x_1 x_2...x_n$ with length $n$, where $x_i=1$ means that the element $i \in V$ is selected into the solution $S$.
Then we define such an instance $I_2$ with a linear function $f$, in which let $V = 1,\ldots,n$, $0 < \alpha <0.5$, and $B= \varepsilon+1$ where $n \ge 2\varepsilon$ and $\varepsilon\geq 1$. 
The function $f$ represented by the decision vector $X$ is given  as:
\begin{equation}
\label{equ:4}
    f(X) = \sum_{i=1}^{\varepsilon}x_i+\varepsilon\sum_{i = \varepsilon+1}^{n}x_i.
\end{equation}
Besides the dispersion of each element in $I_2$ is considered as $\delta_i=\sqrt{\frac{\gamma}{\varepsilon}}$ for $i \in [1,\varepsilon]$, and $\delta_j=\sqrt{\frac{\varepsilon\gamma+\beta}{\varepsilon}}$ for $j \in [\varepsilon+1,n]$ subjected to $0<\gamma$, $0<\beta$ and $\varepsilon\gamma+\beta\leq 3\alpha/(1-\alpha)$,
which indicated by Theorem~\ref{thm:2}.

\begin{theorem}
\label{thm:2}
Given $\varepsilon\geq 1$, there exists a linear instance $I_2$ such that the generalized greedy algorithm GGA applying $h := \sum_{i\in S}\delta^2_i$ fails to guarantee better than $(1/\varepsilon)-$approximation.
\end{theorem}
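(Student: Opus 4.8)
The plan is to show that on $I_2$ the GGA greedily exhausts its budget on the $\varepsilon$ low-value elements of $[1,\varepsilon]$, attaining only $f=\varepsilon$, whereas a feasible packing of $\varepsilon$ high-value elements attains $f=\varepsilon^2$; the ratio $\varepsilon/\varepsilon^2=1/\varepsilon$ then gives the claim. First I would compute the two marginal ratios driving the selection rule in Line~\ref{line:GGA_ratio}. For an element $i\in[1,\varepsilon]$ the marginal gain in $f$ is $1$ and in $h=\sum_{i\in S}\delta_i^2$ is $\gamma/\varepsilon$, giving ratio $\varepsilon/\gamma$; for $j\in[\varepsilon+1,n]$ the gains are $\varepsilon$ and $(\varepsilon\gamma+\beta)/\varepsilon$, giving ratio $\varepsilon^2/(\varepsilon\gamma+\beta)$. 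Since $\beta>0$ and $\varepsilon\geq 1$, a one-line computation gives $\varepsilon/\gamma>\varepsilon^2/(\varepsilon\gamma+\beta)$, so the GGA strictly prefers a small-value element whenever one is still available and feasible; because $f$ is linear and the $\delta_i$ are constant within each block, these ratios never change, so this preference persists throughout the run.

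Next I would pin down the greedy trajectory using the surrogate $\Gamma(S)=E[W(S)]+\kappa_\alpha\sqrt{Var[W(S)]}$ together with the bound $\varepsilon\gamma+\beta\leq 3\alpha/(1-\alpha)$. Taking all $\varepsilon$ small elements gives $\Gamma=\varepsilon+\kappa_\alpha\sqrt{\gamma/3}$, and since $\gamma\leq\varepsilon\gamma<\varepsilon\gamma+\beta\leq 3\alpha/(1-\alpha)$ the second term is at most $1$, so this set satisfies $\Gamma\leq B=\varepsilon+1$ and is feasible; by the ratio ordering the GGA therefore accepts all of $\{1,\dots,\varepsilon\}$. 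Adding any big element on top increases $E[W]$ by $1$ and $Var[W]$ by a positive amount, pushing $\Gamma$ strictly above $\varepsilon+1=B$, so no further element fits and the loop terminates with $f=\varepsilon$. I would then dispatch Lines~\ref{gga:line 10}-\ref{gga:line 11}: the best single feasible element is a big one with $f=\varepsilon$ (feasible since $\Gamma(\{j\})\leq 2\leq B$, hence $Pr[W(j)>B]\leq\alpha$ by Observation~\ref{obs:feasibility}), which does not improve on $f=\varepsilon$.

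Finally I would exhibit the witness $Y=\{\varepsilon+1,\dots,2\varepsilon\}$ of $\varepsilon$ big elements, which exists since $n\geq 2\varepsilon$. Here $\Gamma(Y)=\varepsilon+\kappa_\alpha\sqrt{(\varepsilon\gamma+\beta)/3}\leq\varepsilon+1=B$, precisely because the constant bound $\varepsilon\gamma+\beta\leq 3\alpha/(1-\alpha)$ was tuned to force $\kappa_\alpha\sqrt{(\varepsilon\gamma+\beta)/3}\leq 1$; thus $Y$ is feasible with $f(Y)=\varepsilon^2$, so the optimum is at least $\varepsilon^2$. Since the GGA returns $\varepsilon$, it cannot guarantee better than a $(1/\varepsilon)$-approximation.

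I expect the main obstacle to be the bookkeeping in the second paragraph: one must confirm that the single inequality $\varepsilon\gamma+\beta\leq 3\alpha/(1-\alpha)$ simultaneously makes the $\varepsilon$-small set feasible, forbids appending any big element, and keeps the $\varepsilon$-big witness feasible — i.e., that this one bound separates the greedy trajectory from the optimum without over-constraining the instance (e.g. leaving $\gamma,\beta$ admissible). Checking that the ratio ordering holds at \emph{every} step, rather than just the first, is routine here thanks to linearity of $f$ and the block-constant dispersions.
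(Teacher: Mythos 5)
Your proposal is correct and follows essentially the same route as the paper's own proof: compare the two block-wise marginal ratios $\varepsilon/\gamma$ and $\varepsilon^2/(\varepsilon\gamma+\beta)$ to show the GGA fills up on the $\varepsilon$ low-value elements (and that line~\ref{gga:line 10} cannot help), then exhibit the feasible witness of $\varepsilon$ high-value elements with value $\varepsilon^2$ using the bound $\varepsilon\gamma+\beta\leq 3\alpha/(1-\alpha)$. Your version just spells out the feasibility bookkeeping for $\Gamma$ a bit more explicitly than the paper does.
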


\subsection{Analysis of Using Strategy \uppercase\expandafter{\uppercase\expandafter{\romannumeral2}}}
Since  $h(S) :=\Gamma(S)$ in Strategy \uppercase\expandafter{\uppercase\expandafter{\romannumeral2}},
we know that $h$ is a non-linear function therefore the surrogate weight of each element is changed as the size of the solution grows.
For the analysis (Theoreom~\ref{the:3}), some useful notations and definitions are introduced first. 
Let $S_{cc}$ be the greedy solution generated by the GGA, 
$v_{i}$ be the $i$-th element added to the solution $S_{cc}$, and $S_i = \{v_1,\ldots v_i\} \subseteq S_{cc}$ ($1 \le i \le |S_{cc}|$) be the set containing the first $i$ elements.
Then we define a set $A_i$ to collect all abandoned elements due to the constraint violation before the GGA adds $v_i$ into $S_{i-1}$. Note that $A_{i-1}\subseteq A_i$. 
Besides, the surrogate weight of the element $v_i$ is denoted by $c_i$, where $c_{i} = \Gamma(S_{i}) - \Gamma(S_{i-1})$. 
Moreover, given any two sets $S, T\subset V$, let $f(S\mid T) := f(S\cup T) - f(T)$.

Let $OPT_d$ be the optimal solution of the deterministic instance of the problem.
Given a partial greedy solution $S_k$ generated by the GGA, 
the relation between $S_k$ and $OPT_d$ is first investigated in Lemma~\ref{lemma:2}. 
Observe that $|OPT_d| = \lfloor B \rfloor$ as the expected weight is exactly one.

\begin{lemma}
    \label{lemma:2}
    Let $\zeta = \kappa_{\alpha}\sum_{j\in OPT_d}\sqrt{\delta_j^2/3}$. Given a partial greedy solution $S_{k}$, if $A_k\cap OPT_d = \emptyset$, then 
    $$f(S_{k+1})-f(S_{k})\geq \frac{c_{k+1}}{\lfloor B \rfloor+\zeta }\cdot (f(OPT_d)-f(S_{k})).$$
\end{lemma}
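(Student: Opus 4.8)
The plan is to run the standard density-greedy progress argument, but with the marginal surrogate costs $\Gamma(S_k\cup\{o\})-\Gamma(S_k)$ playing the role that knapsack weights play in the classical proof, and to control those costs through the concavity of the square root. First I would record what the hypothesis $A_k\cap OPT_d=\emptyset$ buys us: no element of $OPT_d$ has been discarded by the feasibility test up through the selection of $v_{k+1}$, so every $o\in OPT_d\setminus S_k$ is still present in $V'$ when $v_{k+1}$ is chosen. Since $v_{k+1}$ is selected as the surviving element of largest ratio and $h=\Gamma$, any such surviving $o$ has a ratio no larger than that of $v_{k+1}$, giving
\[
f(\{o\}\mid S_k)\le \frac{f(S_{k+1})-f(S_k)}{c_{k+1}}\bigl(\Gamma(S_k\cup\{o\})-\Gamma(S_k)\bigr)
\]
for each $o\in OPT_d\setminus S_k$, where $c_{k+1}=\Gamma(S_{k+1})-\Gamma(S_k)$ and $f(S_{k+1})-f(S_k)=f(\{v_{k+1}\}\mid S_k)$.

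Next I would pass from individual elements to the whole of $OPT_d$ by the usual submodular bound. Monotonicity gives $f(OPT_d)\le f(OPT_d\cup S_k)$, and submodularity gives $f(OPT_d\cup S_k)-f(S_k)\le\sum_{o\in OPT_d\setminus S_k}f(\{o\}\mid S_k)$; combining this with the per-element ratio inequality yields
\[
f(OPT_d)-f(S_k)\le \frac{f(S_{k+1})-f(S_k)}{c_{k+1}}\sum_{o\in OPT_d\setminus S_k}\bigl(\Gamma(S_k\cup\{o\})-\Gamma(S_k)\bigr).
\]

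The one genuinely new ingredient is bounding the sum of marginal surrogate costs, and here the variable dispersions enter. Writing $\Gamma(S_k\cup\{o\})-\Gamma(S_k)=1+\kappa_{\alpha}\bigl(\sqrt{Var[W(S_k)]+\delta_o^2/3}-\sqrt{Var[W(S_k)]}\bigr)$ and applying the subadditivity $\sqrt{x+y}-\sqrt{x}\le\sqrt{y}$, each term is at most $1+\kappa_{\alpha}\sqrt{\delta_o^2/3}$. Summing over $OPT_d\setminus S_k$ and enlarging the index set to all of $OPT_d$ bounds the sum by $|OPT_d|+\kappa_{\alpha}\sum_{j\in OPT_d}\sqrt{\delta_j^2/3}=\lfloor B\rfloor+\zeta$, using $|OPT_d|=\lfloor B\rfloor$. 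Substituting into the previous display and rearranging gives the stated inequality.

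Finally, the step I expect to be the main obstacle is the first one: justifying that every $o\in OPT_d\setminus S_k$ is genuinely available (not abandoned) at the moment $v_{k+1}$ is selected, so that the ratio comparison is legitimate. Because $\Gamma$ is nondecreasing under inclusion, an element once infeasible remains infeasible, so an $OPT_d$ element could in principle be examined and discarded before $v_{k+1}$ is added; the hypothesis $A_k\cap OPT_d=\emptyset$ is precisely the bookkeeping device meant to rule this out, and making it airtight requires tracking the abandoned set across the micro-steps between adding $v_k$ and adding $v_{k+1}$. The subadditive bound on $\sqrt{\cdot}$, by contrast, is the \emph{clean} technical heart that distinguishes this variable-dispersion analysis from the identical-dispersion case, where the corresponding cost bound is immediate.
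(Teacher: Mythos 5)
Your proposal is correct and follows essentially the same route as the paper's proof: the submodular decomposition of $f(OPT_d)-f(S_k)$ over $OPT_d\setminus S_k$, the greedy ratio comparison justified by $A_k\cap OPT_d=\emptyset$, and the bound $\Gamma(j\mid S_k)\le 1+\kappa_\alpha\sqrt{\delta_j^2/3}$ via subadditivity of the square root (which the paper uses implicitly and you make explicit). No substantive difference.
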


After that, we can get a relation between $OPT_d$ and $S_{cc}$ by using Lemma~\ref{lemma:2}.

\begin{theorem}
\label{the:3}
The solution obtained by the GGA applying $h := \Gamma(S)$ is a $(1/2-o(1))(1-1/e)-$approximation.
\end{theorem}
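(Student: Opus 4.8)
The plan is to follow the classical telescoping analysis for the generalized greedy algorithm under a knapsack-type constraint, using Lemma~\ref{lemma:2} as the per-step engine, and then to recover the factor $1/2$ from the singleton comparison in Lines~\ref{gga:line 10}--\ref{gga:line 11}. First I would split into two cases according to whether any element of $OPT_d$ is ever abandoned. If no optimal element is rejected, then every element of $OPT_d$ is processed and added, so $OPT_d\subseteq S_{cc}$ and monotonicity gives $f(S_{cc})\ge f(OPT_d)$, trivially beating the claimed ratio. The interesting case is when some optimal element is abandoned; let $o^*\in OPT_d$ be the \emph{first} such element and let $S_k$ be the partial greedy solution at the iteration when $o^*$ is discarded because $\Gamma(S_k\cup\{o^*\})>B$.

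Next I would run the recursion. Since $o^*$ is the first optimal element to be rejected, $A_k\cap OPT_d=\emptyset$, so Lemma~\ref{lemma:2} applies at every step $i\le k$, giving $f(OPT_d)-f(S_i)\le\bigl(1-\frac{c_i}{\lfloor B\rfloor+\zeta}\bigr)\bigl(f(OPT_d)-f(S_{i-1})\bigr)$. The same greedy-ratio inequality that underlies Lemma~\ref{lemma:2} also holds for $o^*$ at $S_k$ (because $o^*$ is the maximiser of the ratio at that iteration and $OPT_d\setminus S_k$ is still available), so $f(o^*\mid S_k)\ge\frac{c_{o^*}}{\lfloor B\rfloor+\zeta}\,(f(OPT_d)-f(S_k))$ with $c_{o^*}=\Gamma(S_k\cup\{o^*\})-\Gamma(S_k)$. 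Telescoping over $v_1,\dots,v_k$ and then over $o^*$, and using $1-x\le e^{-x}$, I obtain $f(OPT_d)-f(S_k\cup\{o^*\})\le\exp\!\bigl(-\Gamma(S_k\cup\{o^*\})/(\lfloor B\rfloor+\zeta)\bigr)f(OPT_d)$. Because $o^*$ was discarded for violating the surrogate budget, $\Gamma(S_k\cup\{o^*\})>B$, so the exponent exceeds $B/(\lfloor B\rfloor+\zeta)$ and hence $f(S_k\cup\{o^*\})\ge\bigl(1-e^{-B/(\lfloor B\rfloor+\zeta)}\bigr)f(OPT_d)$.

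To convert this infeasible set into a guarantee on the algorithm's output I would invoke submodularity and the singleton step. Monotonicity and submodularity give $f(o^*)\ge f(o^*\mid S_k)$, so $f(S_k)+f(o^*)\ge f(S_k\cup\{o^*\})$; since the final greedy set contains $S_k$ and $o^*$ is a feasible singleton (every single element is feasible by assumption), the returned solution has value at least $\max\{f(S_k),f(o^*)\}\ge\frac12\bigl(1-e^{-B/(\lfloor B\rfloor+\zeta)}\bigr)f(OPT_d)$. It then remains to push the exponent to $1$: writing $\lfloor B\rfloor\le B$ and $\rho:=\lfloor B\rfloor+\zeta$, we have $B/\rho\ge 1/(1+\zeta/B)=1-o(1)$ once the aggregate dispersion overhead $\zeta/B$ is shown to vanish, whereupon $1-e^{-B/\rho}=(1-1/e)-o(1)$ and the bound becomes $(1/2-o(1))(1-1/e)$.

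The main obstacle is precisely this last estimate: controlling $\zeta=\kappa_{\alpha}\sum_{j\in OPT_d}\sqrt{\delta_j^2/3}$ relative to $B$. This is where the variable-dispersion setting bites, since unlike the identical-uncertainty case the surrogate weight is nonlinear and each $c_i$ mixes a unit expectation with a dispersion term whose marginal contribution depends on the variance already accumulated. The delicate point is that the natural per-element bound $c_{o}\le 1+\kappa_{\alpha}\sqrt{\delta_o^2/3}$, summed over $OPT_d$, yields exactly the sum-of-square-roots quantity $\zeta$, so making the overhead lower order compared with $B$ (so that $B/\rho\to 1$) requires exploiting the concavity of $\sqrt{\cdot}$ to measure the aggregate dispersion as $\sqrt{\sum_{j}\delta_j^2/3}=O(\sqrt{B})$ rather than as a sum of $\Theta(B)$ individual deviations. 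A secondary technical point is justifying the recursion for the rejected element $o^*$, i.e.\ verifying that the greedy-ratio optimality used inside Lemma~\ref{lemma:2} transfers verbatim to an element that is examined but discarded at $S_k$.
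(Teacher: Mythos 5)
Your proposal follows essentially the same route as the paper's proof: Lemma~\ref{lemma:2} drives the per-step recursion, telescoping with $1-x\le e^{-x}$ yields a bound of the form $1-e^{-\Gamma(\cdot)/(\lfloor B\rfloor+\zeta)}$, and the factor $1/2$ is recovered from Lines~\ref{gga:line 10}--\ref{gga:line 11} via $f(S)+f(v)\ge f(S\cup\{v\})$ and $\max\{a,b\}\ge (a+b)/2$. The one place you genuinely diverge is the augmented set you telescope to: the paper extends the recursion to $S_{k^*+1}$ (the next element the greedy actually adds) and then substitutes $\lfloor B\rfloor<\Gamma(S_{k^*})+c'$ into the denominator of the exponent, whereas you append the rejected optimal element $o^*$ itself and use $\Gamma(S_k\cup\{o^*\})>B$ to lower-bound the numerator. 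Your variant is arguably cleaner: it does not presuppose that a $(k^*+1)$-th element is ever added, and the ratio inequality you need for $o^*$ is immediate because $o^*$ is the ratio-maximizer at the iteration it is discarded, whereas the paper's application of Lemma~\ref{lemma:2} at the step $k^*\to k^*+1$ implicitly compares $v_{k^*+1}$ against an element of $OPT_d$ that has already been removed from $V'$. You also treat explicitly the easy case in which no element of $OPT_d$ is ever rejected, which the paper leaves implicit. Finally, the ``main obstacle'' you flag --- showing that the dispersion overhead makes only an $o(1)$ contribution to the exponent --- is exactly the step the paper dispatches in a single sentence (asserting the $\exp(\cdot)$ correction factor is $1+o(1)$) without the concavity/aggregation argument you sketch; so on this point your write-up is no less complete than the paper's, and your observation that $\zeta=\kappa_\alpha\sum_{j\in OPT_d}\sqrt{\delta_j^2/3}$ can a priori be of order $B$ is a legitimate concern about both proofs rather than a defect of yours alone.
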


 \begin{proof}
    Consider the upper bound of $k$ that is denoted by $k^*$. 
    It has the set $S_{k^*}$ such that the element from $OPT_d$ is first abandoned due to the constraint when the GGA attempts to add it into the set. 
    We denote the abandoned element by $z$ and derive a relation between $S_{k^*}$ and $OPT_d$.
    
    Note that $A_{i}\cap OPT_d = \emptyset$ for $1\leq i\leq k^*$. Following Lemma~\ref{lemma:2},  it gives 
    \begin{equation}
         \label{equ:rskopt}
         f(S_{k^*+1}) -f(S_{k^*}) \geq \frac{c_{k+1}}{\lfloor B\rfloor+\zeta }\cdot(f(OPT_d)-f(S_{k^*})).
    \end{equation}
    As we know that $(1-x)\leq e^{-x}$, then rearranging (\ref{equ:rskopt}) gives 
    \begin{equation}
        \begin{aligned}
            f(OPT_d)&-f(S_{k^*+1}) \\ 
             &\leq \left(1-\frac{c_{k^*+1}}{\lfloor B\rfloor +\zeta }\right)\cdot(f(OPT_d)-f(S_{k^*})) \\
             & \leq e^{-\frac{c_{k^*+1}}{\lfloor B\rfloor +\zeta }} \cdot (f(OPT_d)-f(S_{k^*})).
        \end{aligned}
    \end{equation}
    Recursively,
    \begin{equation}
        \begin{aligned}
            f&(OPT_d) - f(S_{k^*+1}) \\
             &\leq  e^{-\frac{c_{k^*+1}}{\lfloor B\rfloor+\zeta}} \cdot (f(OPT_d)-f(S_{k^*})) \\
             &\leq  e^{-\frac{c_{k^*+1}+c_{k^*}}{\lfloor B\rfloor+\zeta }} \cdot (f(OPT_d)-f(S_{k^*-1})) \\
             &\leq \ldots 
             \leq e^{-\frac{\sum_{i=1}^{k^*+1} c_i}{\lfloor B\rfloor+\zeta }} \cdot f(OPT_d) \\
             &=  e^{-\frac{\Gamma(S_{k^*+1})}{\lfloor B\rfloor+\zeta }} \cdot f(OPT_d),
        \end{aligned}
    \end{equation}
    Consequently, we get the relation between $S_{k^*}$ and $OPT_d$ as 
    \begin{equation}
    \label{equ:rso}
 f(S_{k^*+1}) \geq \left(1-e^{-\Gamma(S_{k^*+1})/(\lfloor B\rfloor+\zeta)}\right)\cdot f(OPT_d).
    \end{equation}
    
    Then we investigate the approximation by using the relation and the abandoned element $z$.
    By Observation~\ref{obs:feasibility} and definitions, it observes that $\Gamma(S_{k^*}\cup \{z\}) = \Gamma(S_{k^*}) + c' > \lfloor B\rfloor$,
    where $c' = \Gamma(z\mid S_{k^*})$. Putting it with (\ref{equ:rso}) together gives 
    \begin{equation}
        \begin{aligned}
            &f(S_{k^*+1}) \geq \left(1-e^{-\frac{\Gamma(S_{k^*+1})}{\Gamma(S_{k^*})+c'+\zeta}}\right)\cdot f(OPT_d) \\
            &= \left(1-e^{-1}exp\left({\frac{\zeta+c'-c_{k^*+1}}{\Gamma(S_{k^*})+c'+\zeta}}\right)\right)\cdot f(OPT_d).
        \end{aligned}
    \end{equation}
    As $S_{k^*+1}$ at least include one element, the expression of $exp\left(\cdot\right)$ is $(1+o(1))$.
    Moreover, let $v^*\in V\setminus S_{k^*}$ be the element that has the largest function value. Observe that $f(v^*) \geq f(v_{k^*+1})$ and $f(S_{cc})>f(S_{k^*})$.
    It gets $f(S_{cc}) + f(v^*) \geq f(S_{k^*}) + f(v^*) \geq f(S_{k^*+1})$.
    Putting them together gets $f(S_{cc}) + f(v^*)\geq (1-o(1))(1-1/e)\cdot f(OPT_d)$, and therefore  $\max_{Y\in\{S_{cc},\{v^*\}\}}f(Y) \geq (1/2-o(1))(1-1/e)\cdot f(OPT_d).$
 \end{proof}

\section{Performance of the GGMA}
\label{sec:pggma}


In this section, we analyze the approximation behavior of the GGMA. 
The performance of the algorithm applying two different strategies is investigated separately.
\subsection{Analysis of Using Strategy \uppercase\expandafter{\uppercase\expandafter{\romannumeral1}}}

Theorem~\ref{thm: pgga2} implies that using Strategy \uppercase\expandafter{\uppercase\expandafter{\romannumeral1}}: 
$h(S):=\sum_{i\in S}\delta_i^2$, 
the GGMA also performs badly in the instances $I_2$, 
which was presented in the Section~\ref{ssec: gga1}.
\begin{theorem}
\label{thm: pgga2}
    Given $\varepsilon\geq 1$, there exists an instance $I_2$ such that the GGMA applying $h := \sum_{i\in S}\delta^2_i$ fails to guarantee better than $(2/\varepsilon - 1/\varepsilon^2)-$approximation.
\end{theorem}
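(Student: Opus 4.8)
The plan is to trace the execution of the GGMA on the instance $I_2$ and then compare the value of its output with a lower bound on $f(OPT_d)$. Since $f$ is linear and, under Strategy~\uppercase\expandafter{\romannumeral1}, $h$ is linear as well, the marginal-gain-to-marginal-$h$ ratio of each item is fixed throughout the run: every item $i\in[1,\varepsilon]$ has ratio $\varepsilon/\gamma$ and every item $j\in[\varepsilon+1,n]$ has ratio $\varepsilon^2/(\varepsilon\gamma+\beta)$. Because $\beta>0$, a one-line comparison gives $\varepsilon/\gamma > \varepsilon^2/(\varepsilon\gamma+\beta)$, so the greedy selection step always prefers the low-value items $[1,\varepsilon]$. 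First I would verify, using $\Gamma(S)=|S|+\kappa_\alpha\sqrt{\sum_{i\in S}\delta_i^2/3}$ together with the assumption $\varepsilon\gamma+\beta\le 3\alpha/(1-\alpha)=3/\kappa_\alpha^2$, that appending each such item keeps the solution feasible; in particular the full set $S_\varepsilon$ of all $\varepsilon$ low-value items satisfies $\Gamma(S_\varepsilon)=\varepsilon+\kappa_\alpha\sqrt{\gamma/3}\le\varepsilon+1=B$, whereas adding any high-value item to $S_\varepsilon$ already exceeds the budget. Hence the greedy backbone of the GGMA is exactly $S_0\subset S_1\subset\cdots\subset S_\varepsilon$, where $S_k$ consists of $k$ low-value items.

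Next I would analyse the augmentation. At the iteration with backbone $S_k$ for $0\le k\le\varepsilon-1$, the augmenting item $v'=\arg\max_{v\in V'}f(S_k\cup\{v\})$ is a high-value item (value $\varepsilon$) whenever such an item is still feasible, giving augmented value $f(S_k\cup\{v'\})=k+\varepsilon$. The decisive computation is to show a high-value item can be appended to $S_{\varepsilon-1}$, i.e. $\Gamma(S_{\varepsilon-1}\cup\{j\})\le B$, which reduces to $\kappa_\alpha^2\big((2\varepsilon-1)\gamma+\beta\big)\le 3\varepsilon$. Using $\kappa_\alpha^2(\varepsilon\gamma+\beta)\le 3$ and the consequence $\kappa_\alpha^2\gamma\le 3/\varepsilon$, I can bound the left-hand side by $3(2\varepsilon-1)/\varepsilon$, and the inequality $3(2\varepsilon-1)/\varepsilon\le 3\varepsilon$ is equivalent to $(\varepsilon-1)^2\ge 0$, which always holds. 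The same estimate for smaller $k$ is only easier, so a high-value augmentation is available at every backbone $S_k$ with $k\le\varepsilon-1$; the best augmented value is therefore $\max_{0\le k\le\varepsilon-1}(k+\varepsilon)=2\varepsilon-1$, attained at $k=\varepsilon-1$, and this is the value the GGMA outputs.

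Finally I would lower-bound the benchmark. As the expected weight of every item equals $1$, the deterministic instance has budget $B=\varepsilon+1$ and $|OPT_d|=\lfloor B\rfloor=\varepsilon+1$; since $n\ge 2\varepsilon$ there are at least $\varepsilon$ high-value items, and selecting $\varepsilon$ of them is feasible and already yields $f(OPT_d)\ge\varepsilon^2$. Combining, the ratio of the GGMA output to $f(OPT_d)$ is at most $(2\varepsilon-1)/\varepsilon^2=2/\varepsilon-1/\varepsilon^2$, which proves the claim. I expect the main obstacle to be the feasibility bookkeeping rather than the closing arithmetic: one must confirm both that the greedy backbone really consists of the $\varepsilon$ low-value items, so that the expensive augmenting item is only ever paired with at most $\varepsilon-1$ cheap items, and that the high-value augmentation stays feasible precisely up to $S_{\varepsilon-1}$, which is exactly where the identity $(\varepsilon-1)^2\ge 0$ is invoked; after that the comparison with $OPT_d$ is immediate.
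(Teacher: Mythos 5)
Your proposal is correct and follows essentially the same route as the paper's proof: trace the GGMA on $I_2$, observe that the greedy ratio $\varepsilon/\gamma > \varepsilon^2/(\varepsilon\gamma+\beta)$ forces the backbone to consist of low-value items while the best augmentation pairs $\varepsilon-1$ of them with one high-value item for a value of $2\varepsilon-1$, then compare against $f(OPT_d)\ge\varepsilon^2$. You simply carry out the feasibility bookkeeping (that every $\varepsilon$-size set, in particular $S_{\varepsilon-1}\cup\{j\}$, satisfies the surrogate constraint) more explicitly than the paper, which asserts it from the setup of $I_2$.
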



\subsection{Analysis of Using Strategy \uppercase\expandafter{\uppercase\expandafter{\romannumeral2}}}
For Strategy \uppercase\expandafter{\uppercase\expandafter{\romannumeral2}}: $h(S):=\Gamma(S)$, 
let $S_{cc}$ be the greedy solution constructed by the greedy strategy for the instance in the chance-constrained setting, 
and $OPT_d$ be the optimal solution for the corresponding deterministic instance.
In the $i$-th generation, the element $v_i$ is selected by the algorithm, and its surrogate weight is denoted by $c_i$. 
Besides, the partial solution containing the first $i$ item is denoted by $S_i\subseteq S_{cc}$.  
Then some useful greedy performance functions are defined to track the performance of the algorithm for the chance-constrained setting.

For a fixed $x \in [0, B]$, let $i$ be the smallest greedy index so that $\Gamma(S_i)> x$. 
Then to track the performance of the greedy strategy, a continuous and monotone piecewise-linear function $g(x)$ is defined as
$g(x) = f(S_{i-1})+(x-\Gamma(S_{i-1}))\frac{f(v_i|S_{i-1})}{c_i}$,
and $g(0):= 0$. 
Additionally, $g'$ denotes the right derivative for $g$ on the interval $[0, \Gamma(S_{cc}))$. 
Observe that $g'$ is always non-negative as the objective value of the greedy solution does not decrease after including a new item. 
Besides, to track the performance of the greedy$+$Max when the greedy solution collects a set of cost $x$, we define the function $g_+(x) = g(x) + f(v\mid S_{i-1})$, where 
$v = \argmax_{j\in V\setminus S_{i-1}:\Gamma(\{j\}\cup S_{i-1})\leq B} f(j\mid S_{i-1})$.

After that, we consider the lower bound of the function $g_+$ in the specific interval.
Following the definition of the fixed greedy index $i$, $z_{max}$ denotes the element that has the largest dispersion in $OPT_d\setminus S_{i-1}$. 
Note that $z_{max}$ is the first abounded element from $OPT_d$ due to the chance constraint.
Denote by $S_{k^*}$ the partial solution.
If $S_{k^*}$ is obtained then $z_{max}$ is removed.
That implies $z_{max}$ can be selected by the algorithm as the augmenting item for the set $S_i$ where $0\leq i\leq  k^*-1$.
Therefore for $x \in [0, \Gamma(S_{k^*-1})]$, we define the greedy$+$Max performance lower bound as $g_1(x) = g(x) + f(z_{max}\mid S_{i-1})$, so that $g_1\leq g_+$ for $x \in [0, \Gamma(S_{k^*-1})]$.

Now we investigate the relation between $OPT_d$, $g_1(x)$ and $g'(x)$ while $x \in [0, \Gamma(S_{k^*-1})]$ in Lemma~\ref{lemma:3} (proof in Appendix \ref{sect_app_proof_lemma3}).

\begin{lemma}
\label{lemma:3}
For any $x \in [0, \Gamma(S_{k^*-1})]$, let $i$ be the smallest greedy index so that $\Gamma(S_i)> x$.
It holds that $$f(OPT_d)  \leq g_1(x) + g'(x)\sum_{j\in OPT_d \setminus z_{max}} \Gamma(j|S_{i-1}).$$
\end{lemma}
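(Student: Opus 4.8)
The plan is to upper-bound $f(OPT_d)$ by combining monotonicity and submodularity of $f$ with the greedy ratio rule, and then to recognise the resulting expression as $g_1(x)$ plus a $g'(x)$-weighted sum of surrogate marginals. Fix $x\in[0,\Gamma(S_{k^*-1})]$ and let $i$ be the smallest greedy index with $\Gamma(S_i)>x$, so that $S_{i-1}$ is the greedy solution immediately before $v_i$ is added. On the interval $[\Gamma(S_{i-1}),\Gamma(S_i))$ the definition of $g$ gives $g'(x)=f(v_i\mid S_{i-1})/c_i$, and since this slope is non-negative we also have $g(x)\ge f(S_{i-1})$.

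First I would apply monotonicity followed by submodularity \eqref{equ:1} to write $f(OPT_d)\le f(OPT_d\cup S_{i-1})=f(S_{i-1})+f(OPT_d\mid S_{i-1})\le f(S_{i-1})+\sum_{j\in OPT_d\setminus S_{i-1}}f(j\mid S_{i-1})$, the last inequality being the usual subadditivity of marginal gains. Next I would single out $z_{max}$ and fold the terms $f(S_{i-1})+f(z_{max}\mid S_{i-1})$ into $g_1(x)=g(x)+f(z_{max}\mid S_{i-1})$, which is valid because $g(x)\ge f(S_{i-1})$.

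The core step is to control each remaining marginal gain by the greedy ratio. Since $v_i$ was chosen by the GGMA to maximise $f(v\mid S_{i-1})/\Gamma(v\mid S_{i-1})$ over the current feasible set $V'$, every $j\in V'$ at step $i$ satisfies $f(j\mid S_{i-1})\le (f(v_i\mid S_{i-1})/c_i)\,\Gamma(j\mid S_{i-1})=g'(x)\,\Gamma(j\mid S_{i-1})$. Applying this to every $j\in(OPT_d\setminus S_{i-1})\setminus\{z_{max}\}$ and then extending the sum to all of $OPT_d\setminus\{z_{max}\}$ --- which changes nothing, because $\Gamma(j\mid S_{i-1})=0$ whenever $j\in S_{i-1}$ --- produces exactly the claimed inequality.

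The main obstacle is justifying that every $j\in OPT_d\setminus\{z_{max}\}$ with $j\notin S_{i-1}$ is genuinely present in $V'$ at step $i$, so that the greedy ratio inequality can be invoked for it. This is where the hypotheses enter: the bound $x\le\Gamma(S_{k^*-1})$ forces $i\le k^*$, and because $z_{max}$ is the first element of $OPT_d$ removed by the chance constraint (having the largest dispersion, hence the largest marginal surrogate weight among $OPT_d\setminus S_{i-1}$), no other optimal element is abandoned before step $k^*$; thus all of $OPT_d\setminus S_{i-1}$ remains feasible up to step $i$. One should also note $z_{max}\notin S_{i-1}$, so that isolating it in both sums is consistent --- this holds since an abandoned element is never added to the greedy solution.
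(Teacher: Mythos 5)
Your proof is correct and takes essentially the same approach as the paper's: monotonicity plus submodular subadditivity of marginal gains, folding the $z_{max}$ term into $g_1(x)$, and bounding the remaining marginals by the greedy ratio $g'(x)$ after verifying that every element of $OPT_d\setminus S_{i-1}$ is still feasible at step $i$ because $z_{max}$ has the largest marginal surrogate weight among them. The only cosmetic difference is that you decompose relative to $S_{i-1}$ and treat general $x$ via $g(x)\ge f(S_{i-1})$, whereas the paper reduces to the points $x=\Gamma(S_{i-1})$ and decomposes relative to $S_{i-1}\cup\{z_{max}\}$, then uses $f(j\mid S_{i-1}\cup\{z_{max}\})\le f(j\mid S_{i-1})$; the two are equivalent.
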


Then we focus on the point $x = \Gamma(S_{k^*-1})$ and analyze the approximation behavior of the GGMA (Thereom~\ref{the:5}) via Lemma~\ref{lemma:3}. 
\begin{theorem}
\label{the:5}
    The solution obtained by the GGMA applying $h := \Gamma(S)$ is a $(1/2-o(1))-$approximation.
\end{theorem}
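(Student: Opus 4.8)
The plan is to mirror the structure of the GGA analysis (Theorem~\ref{the:3}), but now exploiting the greedy$+$Max augmentation to eliminate the extra factor of $(1-1/e)$ and land at $(1/2-o(1))$. First I would fix the evaluation point $x = \Gamma(S_{k^*-1})$ and invoke Lemma~\ref{lemma:3} at this point. Writing $i$ for the smallest greedy index with $\Gamma(S_i) > x$ (so that $i = k^*-1$ here, or the appropriate adjacent index), Lemma~\ref{lemma:3} yields
\begin{equation}
\label{equ:start5}
f(OPT_d) \leq g_1\big(\Gamma(S_{k^*-1})\big) + g'\big(\Gamma(S_{k^*-1})\big)\sum_{j\in OPT_d\setminus z_{max}} \Gamma(j\mid S_{i-1}).
\end{equation}
The key idea is that the summation over $OPT_d\setminus z_{max}$ of the surrogate contributions $\Gamma(j\mid S_{i-1})$ is controlled: since $z_{max}$ has the largest dispersion in $OPT_d\setminus S_{i-1}$ and $z_{max}$ is exactly the first element of $OPT_d$ rejected at $S_{k^*}$, the total surrogate weight of $OPT_d\setminus z_{max}$ cannot exceed roughly $\lfloor B\rfloor$ plus a lower-order dispersion correction. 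I would bound this sum by $(\lfloor B\rfloor + o(\lfloor B\rfloor))$, absorbing the $\kappa_\alpha\sqrt{\cdot}$ terms into the $o(1)$ as was done via the quantity $\zeta$ in Lemma~\ref{lemma:2}.

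Next I would relate $g'$ to the increments that the greedy strategy has already made. Because $g$ is piecewise-linear with slopes equal to the greedy ratios $f(v_i\mid S_{i-1})/c_i$, and because the greedy rule picks the maximizer of this ratio at each step, the slope $g'(\Gamma(S_{k^*-1}))$ is at most the average slope of $g$ over $[0,\Gamma(S_{k^*-1})]$ up to the usual greedy monotonicity. The standard maneuver is to multiply through and use $g(\Gamma(S_{k^*-1})) = f(S_{k^*-1})$ together with the identity $\Gamma(S_{k^*-1}) = \sum_{i\le k^*-1} c_i$; substituting the sum bound into (\ref{equ:start5}) and rearranging gives a lower bound of the form
$$
g_1\big(\Gamma(S_{k^*-1})\big) \geq \Big(1 - \tfrac{g'(\Gamma(S_{k^*-1}))\,(\lfloor B\rfloor + o(\lfloor B\rfloor))}{f(OPT_d)}\Big)f(OPT_d),
$$
but the real leverage comes from the greedy$+$Max structure: $g_1$ already includes the best augmenting item $f(z_{max}\mid S_{i-1})$, so the slope term $g'\cdot\Gamma(\cdot)$ for the single heaviest element is itself dominated by $g_1$. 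Combining these, the $+$Max augmentation lets me show $g_1(\Gamma(S_{k^*-1})) \geq (1-o(1))\,f(OPT_d)$ rather than the weaker $(1-1/e)$ bound.

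Finally I would translate the bound on $g_1$ back to the actual output. By construction $g_1(x) = g(x) + f(z_{max}\mid S_{i-1}) \leq g_+(x)$, and $g_+$ evaluated along the run is exactly the value of some augmented candidate $f(S\cup\{v'\})$ that the GGMA records into $T$ on line~\ref{line:GGMA_ratio}. Hence $f(T) \geq g_1(\Gamma(S_{k^*-1}))$, from which $f(T) \geq (1-o(1))\,f(OPT_d)$. The factor of $1/2$ enters in the same way as the GGA proof: the argument only controls the combined quantity involving the augmenting item, and splitting the best augmented solution against the partial greedy solution (or, as in Theorem~\ref{the:3}, taking a max against a singleton of largest value) costs a factor of two in the worst case, yielding the claimed $(1/2-o(1))$-approximation.

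The hard part will be rigorously bounding $\sum_{j\in OPT_d\setminus z_{max}} \Gamma(j\mid S_{i-1})$ and controlling how the slope $g'$ interacts with this sum once the dispersion terms are variable; the per-element surrogate increments $\Gamma(j\mid S_{i-1})$ are not additive because the square-root in $\Gamma$ makes marginal dispersion contributions sublinear, so I expect the main technical care to lie in showing these variable-uncertainty corrections are genuinely $o(\lfloor B\rfloor)$ and do not degrade the constant. This is precisely the subtlety the paper flags as more challenging than the identical-uncertainty case of~\cite{doerr2020optimization}.
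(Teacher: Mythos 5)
Your starting point matches the paper's: invoke Lemma~\ref{lemma:3} at $x=\Gamma(S_{k^*-1})$ and bound $\sum_{j\in OPT_d\setminus z_{max}}\Gamma(j\mid S_{k^*-1})$ by $\lfloor B\rfloor$ plus a dispersion correction. But the core of your argument has a genuine gap. You claim the $+$Max augmentation lets you conclude $g_1(\Gamma(S_{k^*-1}))\geq(1-o(1))f(OPT_d)$. That cannot be right: the inequality $f(OPT_d)\leq g_1 + g'\cdot(\lfloor B\rfloor+o(\lfloor B\rfloor))$ only forces $g_1$ to be large when the slope term is small, and nothing in your argument rules out the slope term carrying a constant fraction of $f(OPT_d)$; your remark that the slope term "is itself dominated by $g_1$" is asserted, not proved, and is false in general. (If $g_1\geq(1-o(1))f(OPT_d)$ were provable, the theorem would give a $(1-o(1))$-approximation, far stronger than what is claimed.) The paper instead runs a dichotomy: with $c^*:=1+\kappa_\alpha\sqrt{\delta_{k^*}^2/3}$ and $\phi:=\frac{c_{k^*}(\Gamma(S_{k^*-1})+c^*)}{\Gamma(S_{k^*})+\eta}$, either $g_1(\Gamma(S_{k^*-1}))\geq\frac{\phi}{c^*+\phi}f(OPT_d)$ (and then $g_+\geq g_1$ finishes), or $g_1$ is small, in which case the rearranged inequality forces $g'(\Gamma(S_{k^*-1}))$ to be large; combined with the slope lower bound $g'_{min}\geq f(v_{k^*}\mid S_{k^*-1})/c^*$ (obtained via submodularity and $\Gamma(v_{k^*}\mid S_{i-1})\leq c^*$, not via an "average slope" comparison, which is unsupported since the greedy ratios need not be monotone when the $c_i$ vary) and the violation inequality $\Gamma(S_{k^*})+c'_{max}\geq\lfloor B\rfloor$, one gets $g(\Gamma(S_{k^*}))\geq\frac{\phi}{c^*+\phi}f(OPT_d)$.

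Relatedly, you attribute the factor $1/2$ to "splitting the best augmented solution against the partial greedy solution," analogous to the singleton max in Theorem~\ref{the:3}. That is not where the $1/2$ comes from here: the GGMA outputs a single set $T$ with $f(T)$ at least the relevant $g_+$ value, and in both cases of the dichotomy one shows $f(T)\geq\frac{\phi}{c^*+\phi}f(OPT_d)$. The constant appears because $\phi=(1-o(1))c^*$ once the greedy solution contains at least one element, so $\frac{\phi}{c^*+\phi}=\frac{1}{2}-o(1)$. Without the case analysis and the balancing quantity $\phi$, your write-up has no mechanism that actually produces the constant $1/2$, so the proof as proposed does not go through.
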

\begin{proof}
    Following Lemma~\ref{lemma:3} and applying $x = \Gamma(S_{k^*-1})$, it gives
    \begin{equation}
        \begin{aligned}
            f(OPT_d) &\leq g_1(\Gamma(S_{k^*-1}))\\
            &+ g'(\Gamma(S_{k^*-1}))\sum_{j\in OPT_d \setminus Z_{max}} \Gamma(j|S_{k^*-1}).
        \end{aligned}
    \end{equation}
    Here the upper bound of the last term is given below. Recall that $|OPT_d| = \lfloor B\rfloor$. Let $\psi := \sqrt{Var[W(S_{k^*-1})]}$. It holds that 
    \begin{equation}
        \begin{aligned}
             &\sum_{j\in OPT_d \setminus z_{max}} \Gamma(j|S_{k^*-1})\\ 
            &=\sum_{j\in OPT_d} \Gamma(j|S_{k^*-1}) - \Gamma(z_{max}|S_{k^*-1}) \\
            & \leq \sum_{j\in OPT_d} \Gamma(j|S_{k^*-1}) - \Gamma(z_{max}|S_{k^*})  \\ 
            &=\lfloor B\rfloor + \eta - c'_{max},
        \end{aligned}
    \end{equation}
    where $\eta = \kappa_{\alpha}\sum_{j\in OPT_d}  \left(\sqrt{Var[W(S_{k^*-1}\cup \{j\})]}- \psi\right)$ and  $c'_{max} = \Gamma(z_{max}|S_{k^*})$.
    Consequently, we have
    \begin{equation}
 \label{equ:10}
 \begin{aligned}
 f(OPT_d) \leq g_1&(\Gamma(S_{k^*-1})) \\
 &+ g'(\Gamma(S_{k^*-1}))( \lfloor B\rfloor + \eta -c'_{max}).
 \end{aligned}
    \end{equation}
    
    After that, we consider the value of $g_1(\Gamma(S_{k^*-1}))$ in (\ref{equ:10}). Recall that the surrogate weight of $v_{k^*}$ is $c_{k^*}$. 
    Then let $c^* := 1+\kappa_{\alpha}\sqrt{\delta_{k^*}^2/3}$  and 
    $\phi := \frac{c_{k^*}\cdot(\Gamma(S_{k^*-1})+c^*)}{\Gamma(S_{k^*})+\eta}$.
    Two possible cases for it are listed as follows.

    \textbf{Case 1}. 
    $g_1(\Gamma(S_{k^*-1}))\geq \frac{\phi}{c^*+\phi}\cdot f( OPT_d)$. 
    Since $g_1(x)\leq g_+(x)$ for $x\in [0, \Gamma(S_{k^*-1})]$, it directly holds $g_+(\Gamma(S_{k^*-1}))\geq \frac{\phi}{c^*+\phi}\cdot f(OPT_d)$. 
    
    \textbf{Case 2}. $g_1(\Gamma(S_{k^*-1}))< \frac{\phi}{c^*+\phi}\cdot f( OPT_d)$. For this case, we can prove that $g(\Gamma(S_{k^*}))\geq \frac{\phi}{c^*+\phi}\cdot f(OPT_d)$ as following. 
    First of all, rearranging (\ref{equ:10}) gets
    \begin{equation}
    \label{equ:gphi}
 \begin{aligned}
 g'(\Gamma(S_{k^*-1})) &\geq \frac{f( OPT_d)-g_1(\Gamma(S_{k^*-1}))}{\lfloor B\rfloor+ \eta - c'_{max} }\\
 &\geq \frac{c^*\cdot f( OPT_d)}{(c^*+\phi)(\lfloor B\rfloor+ \eta - c'_{max})}.
 \end{aligned}
    \end{equation}
    Besides, let $g'_{min} := \arg\min_{i\in[1,k^*]} g'(\Gamma(S_{i-1}))$. 
    Recall $g'$ is non-negative and $g(0)=0$, thus it holds that 
    \begin{equation}
        g(x)\geq x \cdot g'_{min},
    \end{equation} 
    for any $x\in [0,\Gamma(S_{k^*-1})]$.
    Now we show that $g'_{min} \geq \frac{f(v_{k^*}\mid S_{k^*-1})}{c^*}$.
    Considering the greedy strategy of the GGMA, it holds that  
    $ g'(\Gamma(S_{i-1})) = \frac{f(v_i|S_{i-1})}{c_{i}}\geq \frac{f(v_{k^*}|S_{i-1})}{\Gamma({v_{k^*}|S_{i-1}})}$
    in the $i$-th generation for $1\leq i \leq k^*$.
    Observe $\Gamma({v_{k^*}|S_{i-1}}) \leq c^*$ and $f(v_{k^*}|S_{i-1})\geq f(v_{k^*}|S_{k^*-1})$ for $ i \leq k^*$ as $S_{i-1}\subseteq S_{{k^*-1}}$ (recall (\ref{equ:1})). Therefore we have $\frac{f(v_{k^*}|S_{i-1})}{\Gamma({v_{k^*}|S_{i-1}})}\geq \frac{f(v_{k^*}|S_{k^*-1})}{c^*}.$ 
    Putting them together yields $g'_{min} \geq \frac{f(v_{k^*}\mid S_{k^*-1})}{c^*}$.
    For any $x \in [0,\Gamma(S_{k^*-1})]$, therefore it holds that 
    \begin{equation}
    \label{equ:11}
 g(x) \geq x\cdot \frac{f(v_{k^*}|S_{k^*-1})}{c^*}.
    \end{equation}
    Then applying $x = \Gamma(S_{k^*-1})$ to (\ref{equ:11}), it gets 
    \begin{equation}
        \begin{aligned}
            g(\Gamma(S_{k^*-1})) &\geq \Gamma(S_{k^*-1})\cdot \frac{f(v_{k^*}\mid S_{k^*-1})}{c^*}
        \end{aligned}
    \end{equation}
    Besides, since $g'(\Gamma(S_{k^*-1})) = \frac{f(v_{k^*}\mid S_{k^*-1})}{c_{k^*}}$, rearranging (\ref{equ:gphi}) gets
    $\frac{f(v_{k^*}\mid S_{k^*-1})}{c^*}\geq \frac{c_{k^*}\cdot f( OPT_d) }{(c^*+\phi)(\lfloor B\rfloor+ \eta - c'_{max})}.$
    Putting them together, we have 
    $g(\Gamma(S_{k^*-1}))\geq \frac{f( OPT_d)\cdot\Gamma(S_{k^*-1})\cdot c_{k^*}}{(c^*+\phi)\left(\lfloor B\rfloor+ \eta - c'_{max}\right)}.$

    Now we can derive a lower bound for the objective value of the set $S_{k^*}$. Recall that $v_{k^*}$ is the next added element for the solution $S_{k^*-1}$.  Thus $f(S_{k^*})$ is at least 
    \begin{equation}
        \begin{aligned}
                 g&(\Gamma(S_{k^*}))= g(\Gamma(S_{k^*-1})) + c_{k^*} g'(\Gamma(S_{k^*-1}))\\
         & \geq \frac{ c_{k^*}\cdot(\Gamma(S_{k^*-1}) + c^*)}{(c^*+\phi)(\lfloor B\rfloor+ \eta - c'_{max} )}\cdot f( OPT_d).
        \end{aligned}
    \end{equation}
    Furthermore, by Observation~\ref{obs:feasibility} it yields that 
    $\Gamma(S_{k^*-1}\cup\{v_{k^*}, z_{max}\}) = \Gamma(S_{k^*}) + c'_{max} \geq \lfloor B\rfloor.$ 
    Put them together gets 
    \begin{equation}
        \begin{aligned}
             g(\Gamma(S_{k^*}))&\geq \frac{ c_{k^*}\cdot(\Gamma(S_{k^*-1}) + c^*)}{(c^*+\phi)(\lfloor B\rfloor+ \eta - c'_{max} )}\cdot f(OPT_d)\\ 
            &\geq\frac{c_{k^*}\cdot(\Gamma(S_{k^*-1})+c^*   )}{(c^*+\phi)(\Gamma(S_{k^*})  + \eta)}\cdot f( OPT_d) \\ 
            &= \frac{\phi}{c^*+\phi}\cdot f( OPT_d).
        \end{aligned}
    \end{equation}
    Therefore the GGMA achieves a $\frac{\phi}{c^*+\phi}$-approximation.
    Since at least one element is included in the greedy solution, the expression of $\frac{\phi}{c^*+\phi}$ is $(1/2-o(1))$. 
    Additionally, as the objective value of the augmented output solution $T$ is no worse than $g_+(\Gamma(S_{k^*}))$, it holds 
    $f(T)\geq (1/2-o(1))\cdot f(OPT_d)$.
\end{proof}

\section{Experiments}
\label{sec:exp}

\begin{figure*}[t]
    \centering
    \includegraphics[width = \textwidth]{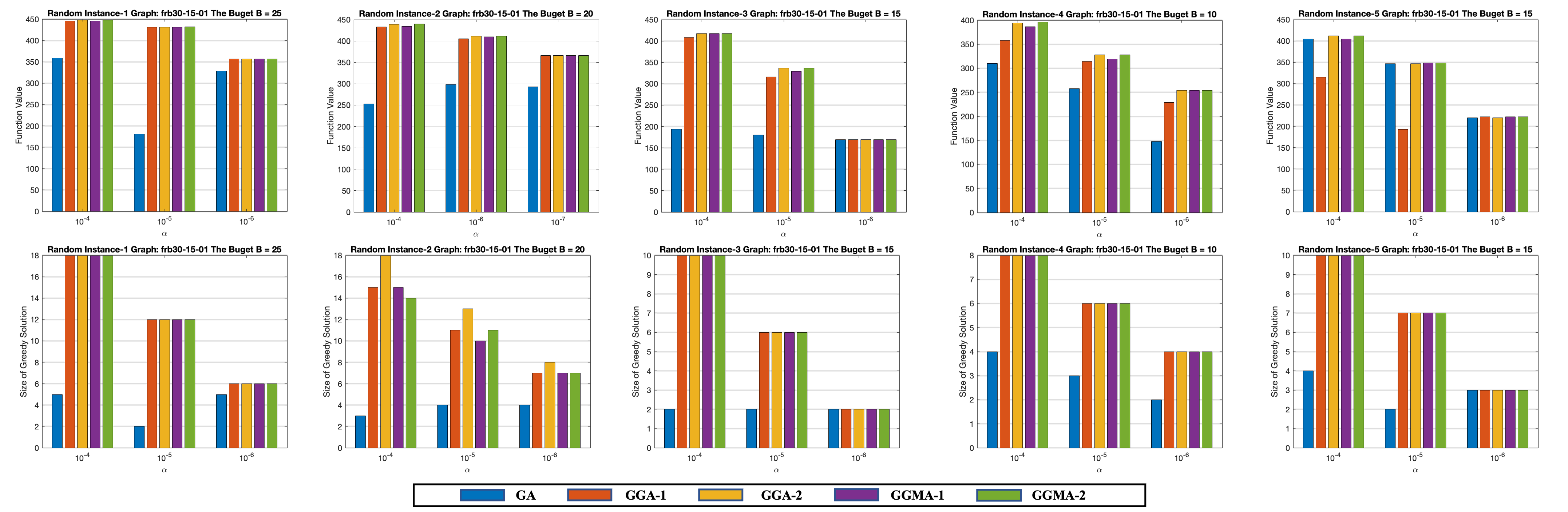}
    \vspace*{-5mm}
    \caption{$N(V')$ (top) and $|V'|$ (bottom) for the graph frb30-15-01 with different budgets when $\delta$ is randomly sampled from the uniform distribution.}
    \vspace*{-2mm}
    \label{fig:1}
\end{figure*}

\begin{figure*}[t]
    \centering
    \includegraphics[width = 0.99\textwidth]{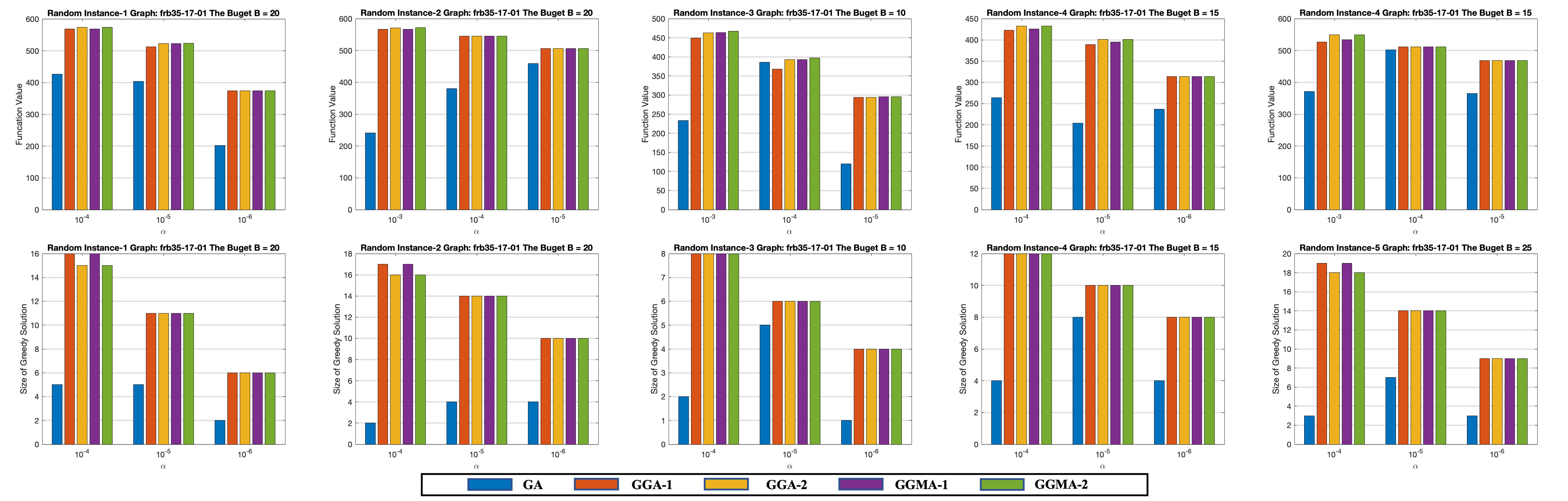}
    \vspace*{-2mm}
    \caption{$N(V')$ (top) and $|V'|$ (bottom) for the graph frb35-17-01 with different budgets when $\delta$ is randomly sampled from the uniform distribution.}
    \vspace*{-2mm}
    \label{fig:2}
\end{figure*}

\begin{figure*}[t]
    \centering
    \includegraphics[width = 0.9\textwidth]{  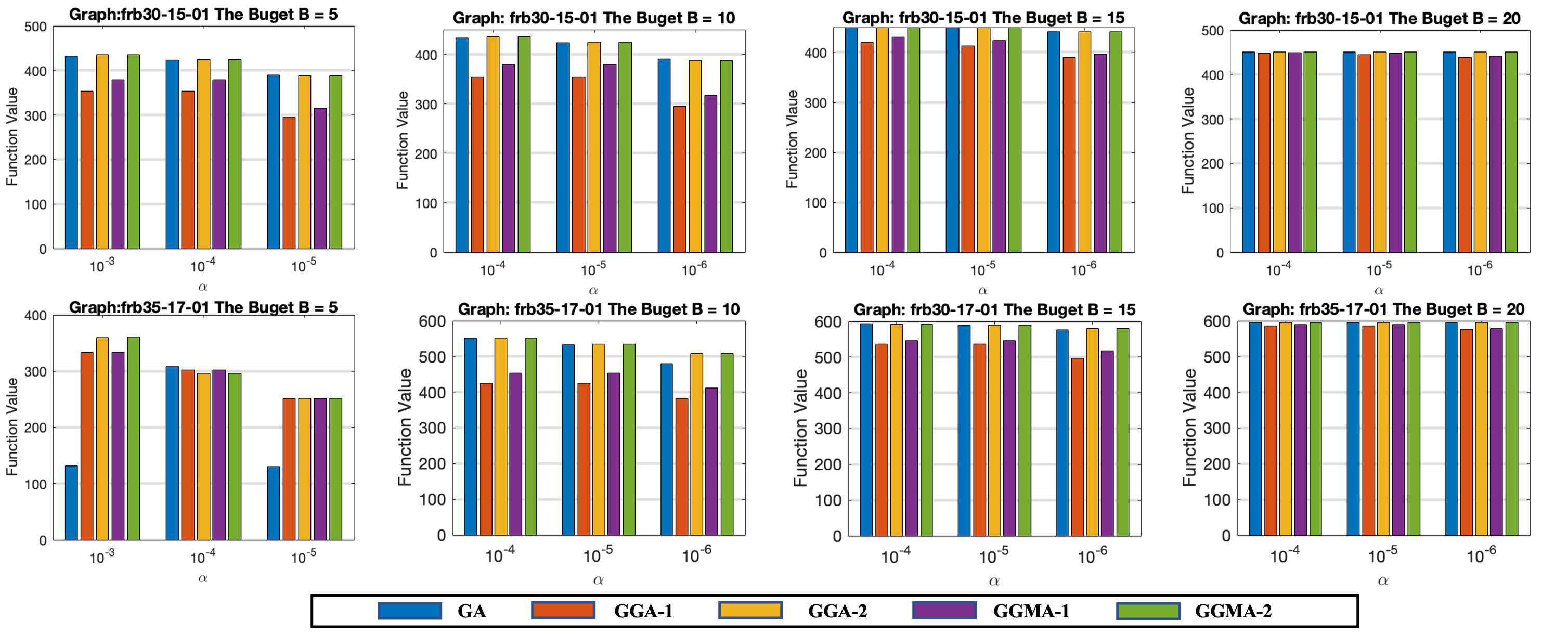}
    \vspace*{-2mm}
    \caption{$N(V')$ for the graphs frb30-15-01 (top) and frb35-17-01 (bottom) with different budgets when $\delta$ is based on the degree.}
    \label{fig:3}
    \vspace*{-2mm}
\end{figure*}

\begin{figure*}[t]
    \centering
    \includegraphics[width =\textwidth]{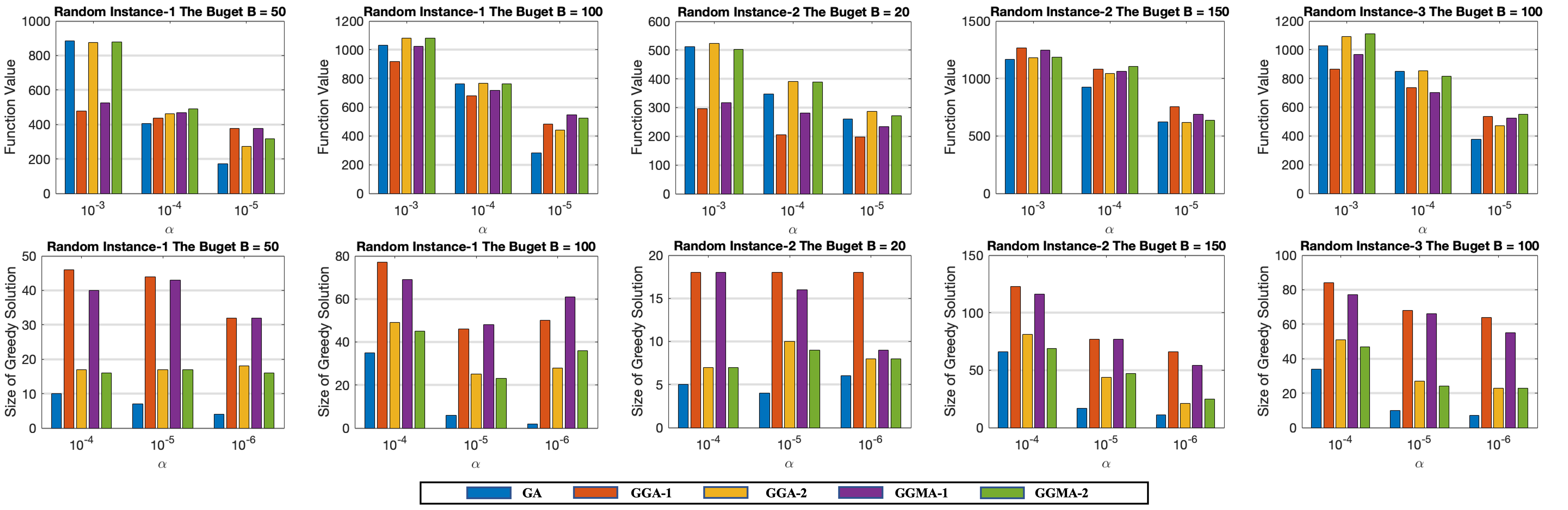}
    \vspace*{-5mm}
    \caption{$E[I(X)]$ (top) and $|X|$ (bottom) for IMP with different budgets when $\delta$ is randomly sampled from the uniform distribution.}
    \label{fig:4}
    \vspace*{-2mm}
\end{figure*}

In this section, we regard the GA as the baseline algorithm and evaluate the experimental performance of other algorithms (namely the GGA and the GGMA) on two significant submodular optimization problems such as the maximum coverage problem (MCP) and the influence maximum problem (IMP) with chance constraint. 
Following the specific setting described in Section~\ref{sec:pl}, the expectation of each element's weight is set as $1$, and the dispersion value of each item is different.

\subsection{The Maximum Coverage Problem}
The first submodular problem is the maximum coverage problem~\cite{feige1998threshold,khuller1999budgeted}.
We consider a chance-constrained version of the MCP based on the graph.
Given an undirected graph $G = (V,E)$, 
we denote the degree of the node $v_i$ by $D(v_i)$,
and the number of all nodes of $V'\subseteq V$ and their neighbors in $G$ by the objective function $N(V')$. 
The MCP aims to find a subset $V'$ so that $N(V')$ is maximized under the constraint. 
Moreover, given a linear cost function $c: V\to \mathbb{R}^{+}$, 
the problem under chance constraint is formulated as 
\begin{equation}
    \argmax_{V'\subseteq V}~N(V')~s.t.~Pr[c(V')>B]\leq \alpha.
\end{equation}

The graph used in the instances are frb30-15-01 (450 nodes and 17827 edges) and frb35-17-01 (595 nodes and 27 856 edges)~\cite{nr}.
In terms of settings, for each node $v_i$, the cost $a_i$ is 1 but the value of the dispersion $\delta_i$ is set by two different methods. 
The first method is that $\delta_i$ is independently uniformly at random sampled from $[0,1]$.
To analyze our experiments more rigorously, we independently randomly sample the value of dispersion five times for each graph.
Moreover, we also consider $\delta_i$ is associated with the degree of the node, which can be expressed as $\delta_i = D(v_i)/\sum_{v\in V}D(v)$.
Furthermore, we investigate all combinations of $\alpha \in \{10^{-4},10^{-5}, 10^{-6}\}$ and $B \in \{10, 15, 20, 25\}$ for the experimental investigation of the algorithms. 
The performance of the algorithms is measured in terms of the function value $N(V')$.

The experimental results are shown in figures~\ref{fig:1},~\ref{fig:2} and~\ref{fig:3},
which indicate that
for the instances with the same budget, both the function value and the number of nodes of the solution by the algorithms GGA and GGMA decline 
as $\alpha$ increasing. 
They also show
that the performance of the GA is worse than GGA and GGMA using strategy \uppercase\expandafter{\romannumeral2} among most instances.
It also can be found that GA collects fewer items before reaching the budget,
which matches our theoretical analysis. 
Besides, for the same strategy applied to the different algorithms, 
the GGMA slightly outperforms the GGA while applying strategy \uppercase\expandafter{\romannumeral1} 
to the instances with a lower budget.
Additionally, the performance of the GGMA using strategy \uppercase\expandafter{\romannumeral2} is comparable to the GGA.

In terms of strategies,
we observe from figures that using strategy \uppercase\expandafter{\romannumeral2} can improve the performance of all algorithms
by strategy \uppercase\expandafter{\romannumeral1}.
More precisely, the algorithms with strategy \uppercase\expandafter{\romannumeral1} can output a solution that includes more nodes but is with a lower function value, among most instances.
It is noticeable that the GGMA with strategy \uppercase\expandafter{\romannumeral2} can obtain high-quality solutions for these instances.

\subsection{The Influence Maximization Problem}
We now study the influence maximization problem~\cite{zhang2016submodular,qian2017subset,leskovec2007cost}.
The IMP aims to identify the set of users, 
who are the most influential in a large-scale social network.

The goal of the IMP is to maximize the spread of influence over a given social network, 
i.e., a graph of interactions within a group of users~\cite{kempe2003maximizing}. 
This section presents the experimental analysis of some chance-constrained IMP instances.

Let a directed graph be $G = (V,E)$ to represent a social network, 
in which 
each node $v_i\in V$ corresponds to a user, and
the probability $p_{i,j}$ of the edge in $E$ represents the strength of the influence between a pair of users $v_i$ and $v_j$.
The IMP aims to find a subset $X \subseteq V$ such that the expected number of nodes $E[I(X)]$ (the objective function of the problem) activated by propagating from $X$ is maximized subject to the constraints.
Given a linear cost function $c: V\to \mathbb{R}^{+}$ and a budget $B$, 
the chance constraint version of the IMP is formulated as
\begin{equation}
    \argmax_{X\subseteq V}~E[I(X)]~s.t.~ Pr[c(X)> B]\leq \alpha.
\end{equation}

The dataset \textit{Social circles: Facebook} consists of friends lists collected from a social networking service, which includes 4,039 nodes and 176,468 edges ~\cite{leskovec2012learning}.
We transform the instances in this dataset to chance-constrained IMP instances.
For each node $v_i$, its expected cost is set as 1, and the dispersion $\delta_i$ is independently and uniformly sampled from $[0,1]$.
The algorithms are evaluated for all pairs of budgets $B\in \{20,50,100,150\}$ and tolerate probabilities $\alpha \in \{10^{-3},10^{-4},10^{-5}\}$.
We use the function value $E[I(X)]$ to evaluate 
the performance of algorithms, and independently sample the value of dispersion three times to analyze our experiments more rigorously.

Figure~\ref{fig:4} clearly shows that
the GA collects fewer nodes than other algorithms and performs unwell when $\alpha$ is small.
For the same budget instances, the function value obtained sharply decreases with the increasing value of $\alpha$. 
This phenomenon is common among those algorithms.
Moreover, we observe that the GGMA includes fewer nodes than GGA but obtains higher function values in most instances.

In terms of strategies, 
the results demonstrate that 
the algorithms with strategy \uppercase\expandafter{\romannumeral1} is significantly worse in some instances than them with strategy \uppercase\expandafter{\romannumeral2}, 
even worse than the GA although it collects more elements. 
On the other hand, the algorithms applying strategy \uppercase\expandafter{\romannumeral2} can fix it in most instances, which coincides with our theoretical analysis. 
In addition,
the GGMA beats the GGA in terms of the quality of the output solution in most instances.

\section{Conclusion}
\label{sec:con}
The paper studied a chance-constrained submodular optimization problem with variable uncertainties
and investigated the performance of the GA, the GGA, and the GGMA on it. 
In the setting, the weights of elements are sampled from distributions with the same expectation but varied dispersion values. 
We found that the GA does not perform well even in some linear instances. 
Besides the GGA and the GGMA respectively can achieve guarantee a $(1/2-o(1))(1-1/e)-$approximation 
and a $(1/2-o(1))-$approximation of the optimal solution for a deterministic setting. 
Additionally, the experimental results showed that the GGMA using the surrogate weight based on the one-sided Chebyshev's inequality beats 
other algorithms in some instances of the MCP and the IMP which are typical submodular problems.

The future work is   to broaden the exploration of chance-constrained submodular problems with variable weights and uncertainties, and potentially different distributions. These subsequent studies will be both challenging and engaging, with the aim of yielding more meaningful insights to enhance our comprehension of the problem.

\ack This work has been supported by the Australian Research Council (ARC) through grant FT200100536, 
the Hunan Provincial Natural Science Foundation of China through grant 2021JJ40791,
and the Open Project of Xiangjiang Laboratory (No.22XJ03005).

\bibliography{mainRef}

\newpage
\appendix

\section{Proofs}

\subsection{Proof of Lemma~\ref{lemma:2}}
\label{sect_app_proof_lemma2}
\begin{proof}
    By the monotonicity and submodularity, it has
    \begin{equation}
    \label{equ:suq}
 \begin{aligned}
 f(OPT_d)&\leq f(OPT_d\cup S_k) \\
 &\leq f(S_k) + f(OPT_d\setminus S_k|S_k)\\
 &\leq f(S_k) +\sum_{{j}\in OPT_d\setminus S_k} \Gamma(j|S_k)\cdot\frac{f(j\mid S_k)}{\Gamma(j|S_k)},
 \end{aligned}
    \end{equation}
    where $\Gamma(j|S_k) = \Gamma(S_k\cup \{j\}) - \Gamma(S_k)$. Then according to the greedy strategy, for any element $j\in OPT_d\setminus S_k$, it gets  
    $\frac{f({v_{k+1}}\mid S_k)}{c_{k+1}} \geq \frac{f(j\mid S_k)}{\Gamma(j|S_k)}$,
    since $OPT_d\cap A_k = \emptyset$. Consequently putting it with (\ref{equ:suq}) together gives that
    \begin{equation}
    \label{equ:ggap}
    \begin{aligned}
 f(OPT_d) \leq f&(S_k)\\
 &+ \frac{f(v_{k+1}|S_k)}{c_{k+1}} \sum_{j\in OPT_d\setminus S_k}&\Gamma(j|S_k) .
    \end{aligned}
    \end{equation}
    There is an upper bound for $\sum_{j\in OPT_d\setminus S_k}\Gamma(j|S_k)$. 
    As it has $\Gamma(j|S_k) \leq 1+\kappa_{\alpha} \sqrt{\delta_j^2/3}$ for any $j\in OPT_d\setminus S_k$, it holds that 
    $\sum_{j\in OPT_d\setminus S_k}\Gamma(j|S_k) \leq \lfloor B\rfloor + \zeta$,
    where $ \zeta = \kappa_{\alpha} \sum_{j\in OPT_d}\sqrt{\delta_j^2/3}$. 
    Substituting it into (\ref{equ:ggap}) completes the proof.
\end{proof}

\subsection{Proof of Lemma~\ref{lemma:3}}
\label{sect_app_proof_lemma3}
\begin{proof}
    It suffices to show the statement only for the points $x = \Gamma(S_{i-1})$ where $1\leq i \leq k^*$. 
    Let $S'_{i-1}:=S_{i-1}\cup z_{max}$. 
    Obverse that $g_1(x) = g(\Gamma(S_{i-1})) + f(z_{max}\mid S_{i-1})= f(S_{i-1}\cup z_{max}) = f(S'_{i-1}).$
    By monotonicity and submodularity, it gives that
    \begin{equation}
    \label{equ:7}
 \begin{aligned}
 f(OPT_d) & \leq f(S_{i-1}\cup OPT_d) \\
 &= f(S'_{i-1}) + f(OPT_d\setminus (S'_{i-1})\mid S'_{i-1}\}) \\
 & \leq g_1(x) + \sum_{j\in OPT_d\setminus S'_{i-1}} f(j\mid S'_{i-1}) 
 \end{aligned}
    \end{equation}
    Since $S_{i-1}\subset S'_{i-1}$, it holds
    $ f(j\mid S'_{i-1}) \leq  f(j\mid S_{i-1})$ for any element $j\in OPT_d\setminus S'_{i-1}$ by submodularity (recall the inequality (\ref{equ:1})).
    Furthermore, recall that $z_{max} $ has the largest dispersion  in $OPT_d\setminus S_{i-1}$ and it still can be added into $S_i$, thus any elements from $OPT_d\setminus S'_{i-1}$ also can be added into $S_i$ without violating the constraint. According to the greedy strategy, it gives that 
    $\frac{f(j\mid S_{i-1})}{\Gamma(j|S_{i-1}) } \leq \frac{f(v_i\mid S_{i-1})}{c_i} = g'(x),$
    for any element $j\in OPT_d\setminus S'_{i-1}$.
    Consequently, putting them with (\ref{equ:7}), it gets that 
    \begin{equation}
    \label{equ:8}
 \begin{aligned}
 &f(OPT_d) \\
 & \leq g_1(x) + \sum_{j\in OPT_d\setminus S'_{i-1}} f(j\mid S'_{i-1}) \\
 &\leq g_1(x) + \sum_{j\in OPT_d\setminus S'_{i-1}} \Gamma(j|S_{i-1}) \frac{f(j\mid S_{i-1})}{\Gamma(j|S_{i-1})} \\
 &\leq g_1(x) + \sum_{j\in OPT_d\setminus S'_{i-1}} \Gamma(j|S_{i-1}) \cdot g'(x) \\
 &\leq g_1(x) + g'(x) \sum_{j\in OPT_d\setminus z_{max}} \Gamma(j|S_{i-1}).\\
 \end{aligned}
    \end{equation}
The proof is completed.
\end{proof}

\subsection{Proof of Theorem~\ref{thm:2}}
\label{sect_app_proof_thm:2}
\begin{proof}

We consider the instance $I_2$. Given a $\varepsilon-$size solution $X$, it can be found that the largest surrogate weight of $X$ is $\varepsilon+\sqrt{\frac{1-\alpha}{3\alpha}(\varepsilon\gamma+\beta)}\leq \varepsilon+1$. Thus every $\varepsilon-$size solution is feasible for $I_2$. Observe that any $(\varepsilon+1)-$size solution cannot be accepted.

Then we analyze the solution returned by the GGA for $I_2$. 
For the first $\varepsilon$ elements and the remaining, the marginal ratios between $f$ and $h$ are $\frac{\varepsilon}{\gamma}$ and $\frac{\varepsilon^2}{\varepsilon\gamma+\beta}$, respectively.
By the greedy strategy of the GGA, the algorithm always selects the items from the first $\varepsilon$ of them since $\frac{\varepsilon}{\gamma}> \frac{\varepsilon^2}{\varepsilon\gamma+\beta}$ until no more element can be added. 
Consequently, the algorithm returns the solution $X$ that contains all the first $\varepsilon$ elements, and $f(X)=\varepsilon$. Moreover, considering line~\ref{gga:line 10} in the GGA, the algorithm gets $f(v^*) = \varepsilon.$ Finally, GGA outputs the solution $S_{cc}$ with the value $f(S_{cc}) = \varepsilon$.

Besides, we define a feasible solution $Y\subseteq \{\varepsilon+1,\ldots, n\}$ that has $\varepsilon$ elements.
It has $f(Y) = \varepsilon^2$ by Equation~(\ref{equ:4}). Denote the optimal solution of $I_2$ for the deterministic setting by $OPT_d$. Obviously, $f(OPT_{d}) \geq f(Y)$ and $f(S_{cc})/f(OPT_{d}) \leq f(S_{cc})/f(Y)=1/\varepsilon$. The claim is proved.
\end{proof}

\subsection{Proof of Theorem~\ref{thm: pgga2}}
\label{sect_app_proof_pgga2}
\begin{proof}
    Considering the given linear instance $I_2$, recall that any $\varepsilon-$size solution is feasible. 
    Because the marginal ratio between the additional gain in $f$ and $h$ are $\frac{\varepsilon}{\gamma}$ and $\frac{\varepsilon^2}{\varepsilon\gamma+\beta}$ for the first $\varepsilon$ elements and rest of them respectively, the GGMA constructs a $(\varepsilon-1)-$size partial solution $X\subseteq \{1,\ldots,\varepsilon\}$ and selects one augmenting item $j\in \{\varepsilon+1,\ldots n\}$ with objective value $\varepsilon$. 
    Thus the output solution is $S_{cc} = X\cup \{j\}  $ with $f(S_{cc}) = 2\varepsilon - 1$. 
    Besides it has $f(OPT_{d})\geq \varepsilon^2$.
    Observe that $f(S_{cc})/f(OPT_{d}) \leq 2/\varepsilon - 1/\varepsilon^2$. The proof is completed.
\end{proof}

\end{document}